\documentclass[12pt]{amsart}
\usepackage{fancyhdr}
\usepackage{txfonts}

\usepackage{amsmath,amsthm,amssymb,hyperref, mdframed}
\usepackage{mathrsfs} 
\usepackage{tikz}
\usepackage[all]{xy}
\usetikzlibrary{decorations.markings}
\usetikzlibrary{backgrounds,shapes}
\usetikzlibrary{patterns,hobby}
        \usepackage{pgfplots}
        \pgfplotsset{compat=1.6}

\usepackage{subfig}

\usepackage{color}
\usepackage{hyperref}
\hypersetup{
    colorlinks=true, 
    linktoc=all,     
    linkcolor=blue,  
    citecolor=blue,
    filecolor=blue,
    urlcolor=blue
}

\definecolor{aliceblue}{rgb}{0.9, 0.95, 1.0}
\definecolor{pallido}{RGB}{221,227,227}

\usepackage{geometry} 

\theoremstyle{plain}                    
\newtheorem{thm}{Theorem}[section]

\newtheorem{lem}[thm]{Lemma}

\newtheorem{defn}[thm]{Definition}

\newtheorem{prop}[thm]{Proposition}
\newtheorem{cor}[thm]{Corollary}

\theoremstyle{definition}

\newtheorem{ques}[thm]{Question}

\theoremstyle{remark}

\title[From discrete to dense: explorations in the moduli space of triangles]{From discrete to dense:\\ explorations in the moduli space of triangles}

\begin{document}

\author{Aahana Aggarwal \and Subhojoy Gupta \and Ajay K. Nair}

\address{Department of Mathematics, Indian Institute of Science, Bangalore, India}
\email{subhojoy@iisc.ac.in}
\email{ajaynair.mv@gmail.com}
\address{Independent Researcher} 
\email{aggarwalaahana1@gmail.com}

\date{}

\begin{abstract}
The moduli space of triangles is a two-dimensional space that records triangle shapes in the plane, considered up to similarity. We study the subset corresponding to \textit{lattice triangles}, which are triangles whose vertices have integer coordinates. We prove that this subset is \textit{dense}, that is, every triangle shape can be approximated arbitrarily well by lattice triangles. However, when one restricts to lattice triangles in the square $[-N,N]^2$, their shapes do \textit{not} become uniformly distributed in the moduli space as $N$ grows. Along the way, we encounter connections with geometry, number theory, analysis, and probability.
 \end{abstract}

\maketitle

\section{Introduction}

A \textit{lattice triangle} is a triangle in the plane whose vertices lie in the integer lattice  $\mathbb{Z}^2 = \{ (m,n) \vert\ m,n\in \mathbb{Z}\}$.  It is an elementary yet non-trivial fact that a lattice triangle cannot be equilateral; see \S \ref{subsection:LatticeTrianglesAreNotEquilateral} for a proof of that fact, and \cite{Shirali} for four other proofs,  involving basic geometry and number theory. 

\medskip

In this article, we shall address the broader geometric question --  what do lattice triangles look like when considered up to similarity? That is, if we forget their actual size and position in the plane, how are their \textit{shapes} distributed amongst all possible triangle shapes?  To make this more precise, we introduce the \textit{moduli space of triangles} $\mathcal{M}_\Delta$,  which is the parameter space recording triangles up to translation, rotation, reflection, and rescaling. Any such similarity class of a triangle is uniquely determined by its three side-lengths, which can be further rescaled to have a fixed perimeter; thus, the moduli space $\mathcal{M}_\Delta$ is two-dimensional.  We give details of this in \S\ref{subsection:ModuliSpaceOfTriangles}, and describe how to identify this as a domain in the plane using these normalized side-lengths.


\medskip

The similarity class of any lattice triangle uniquely determines a point in this planar domain representing the moduli space $\mathcal{M}_\Delta$. 
Our first result is about the set of points determined by \textit{all} lattice triangles. 

\begin{thm}\label{thm:main}
    The set of all lattice triangles $\mathcal{L}$ determines a dense set in $\mathcal{M}_\Delta$. 
\end{thm}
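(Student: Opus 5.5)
The plan is to use the fact that a triangle's similarity class depends continuously on the positions of its vertices, and that rational points are dense in the plane. Given any non-degenerate triangle with vertices $A,B,C\in\mathbb{R}^2$ and any $\epsilon>0$, I would produce a lattice triangle whose normalized side-lengths lie within $\epsilon$ of those of $ABC$. Since every point of $\mathcal{M}_\Delta$ is the class of some non-degenerate triangle, this proves Theorem \ref{thm:main}.

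\textbf{Step 1: continuity.} The map sending a non-degenerate ordered triple $(A,B,C)\in(\mathbb{R}^2)^3$ to its point in $\mathcal{M}_\Delta$ is continuous. In the normalized side-length model of \S\ref{subsection:ModuliSpaceOfTriangles}, this map is
\[
(A,B,C)\mapsto \frac{(|BC|,|CA|,|AB|)}{|BC|+|CA|+|AB|}.
\]
Each side-length is continuous in the vertices. The perimeter is bounded away from zero near a non-degenerate triple, so the quotient is continuous there. Non-degeneracy is itself an open condition, since it amounts to $\det(B-A,C-A)\neq 0$. Hence there is a $\delta>0$ such that any triangle with vertices $A',B',C'$ satisfying $|A-A'|,|B-B'|,|C-C'|<\delta$ is non-degenerate and has shape within $\epsilon$ of the shape of $ABC$.

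\textbf{Step 2: rational approximation.} Since $\mathbb{Q}^2$ is dense in $\mathbb{R}^2$, I choose rational points $A',B',C'$ within $\delta$ of $A,B,C$. Let $q$ be a common denominator of all six coordinates. Then $qA',qB',qC'$ lie in $\mathbb{Z}^2$.

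\textbf{Step 3: rescale.} Scaling by $q$ is a similarity, so the triangle $qA'\,qB'\,qC'$ is a lattice triangle with the same point in $\mathcal{M}_\Delta$ as $A'B'C'$. That point is within $\epsilon$ of the class of $ABC$, which finishes the argument.

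The only step needing real care is Step 1: one must confirm that the paper's planar model of $\mathcal{M}_\Delta$ is a continuous function of the normalized side-lengths, or of the angles. This should follow directly from the construction in \S\ref{subsection:ModuliSpaceOfTriangles}. If the model instead identifies triangles by ordered or unordered side-lengths modulo permutation, continuity still holds after passing to the quotient, because the quotient map is continuous.
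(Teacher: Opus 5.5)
Your proof is correct, but it takes a different route from the paper's. The paper normalizes the target triangle to have vertices $(0,0)$, $(1,0)$, $(x,y)$. It then uses the simultaneous Dirichlet lemma (Lemma~\ref{lem:SimultaneousDirichlet}) to find one integer $M$ for which $(Mx,My)$ lies within $\varepsilon$ of a lattice point $(N_x,N_y)$. The lattice triangle $(0,0),(M,0),(N_x,N_y)$ is then a small perturbation of a scaled copy of the target. You instead approximate all three vertices by rational points and clear denominators. Your only arithmetic input is the density of $\mathbb{Q}^2$ in $\mathbb{R}^2$, together with scale-invariance of the shape. Your Step 1 also handles non-degeneracy and the sorting into $a\le b\le c$ more carefully than the paper's brief appeal to continuity.

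Your argument shows that no Diophantine approximation is needed for the qualitative density statement. Even in the paper's setup, one could take $N_x=\lfloor Mx\rfloor$, $N_y=\lfloor My\rfloor$ with $M$ large, since an absolute error below $\sqrt{2}$ is negligible relative to a triangle of size $M$. What the Dirichlet route buys is efficiency. Since $M\le N^2$ while the error is below $1/N$, it gives approximating lattice triangles of diameter about $M$ whose vertex error, relative to their size, is of order $M^{-3/2}$ rather than the $M^{-1}$ your construction generically gives. That kind of quantitative information bears on the rate-of-convergence questions in the epilogue, although the paper's proof does not exploit it.
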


In other words, the similarity class of \textit{any} triangle can be approximated arbitrarily well by lattice triangles.
Our proof of this involves Dirichlet's approximation theorem, a basic result in analysis that can be thought of as the first fundamental result in the field of Diophantine approximation. In particular, we shall need a two-variable version of Dirichlet's theorem, that we prove in \S \ref{subsection:SimultaneousDirichletApproximation}
using only the pigeonhole principle. 

\medskip

We then turn to study the finer \textit{distribution} of the set in moduli space determined by $\mathcal{L}$.  Our question is the following: if we look at lattice triangles with vertices inside the square $[-N,N]\times[-N,N]$, and record their shapes in the moduli space $\mathcal{M}_\Delta$, do these points become evenly spread out as $N$ grows? In the language of measure theory, this asks whether the corresponding subsets become \textit{equidistributed} in the moduli space, as $N\to \infty$. To frame this more precisely, we define the notion of ``equidistribution" of ``weighted sets" (where points are counted with multiplicity) in \S \ref{subsection:TheQuestionOfEquidistribution} -- see Question \ref{ques:equi} for the final statement of the question. 

We give a negative answer to this question in our next result. 

\begin{thm}\label{thm:nequi}
The similarity classes of lattice triangles in $[-N,N]\times [-N,N]$, counted with multiplicity, do not equidistribute, that is, the corresponding sequence of distributions does not converge to the uniform (area) measure on $\mathcal{M}_\Delta$, as $N\to \infty$. 
\end{thm}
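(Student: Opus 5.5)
The plan is to identify the limiting distribution explicitly and then show that it is not the area measure. Rescale the square $[-N,N]^2$ to $[-1,1]^2$ by the factor $1/N$; this does not change similarity classes. A lattice triangle in $[-N,N]^2$, counted with multiplicity, is then an ordered triple $(p_1,p_2,p_3)$ of points of the grid $\frac1N\mathbb{Z}^2\cap[-1,1]^2$. Let $S$ denote the shape map, sending a non-degenerate triple to its class in $\mathcal{M}_\Delta$ (the normalized side-lengths of \S\ref{subsection:ModuliSpaceOfTriangles}). The distribution $\mu_N$ of Question \ref{ques:equi} is the pushforward under $S$ of the normalized counting measure on such triples. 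Collinear triples number only $O(N^5\log N)$ against roughly $(2N+1)^6$ triples in total, so they can be discarded in the limit.

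\textbf{Step 1.} I will show that $\mu_N$ converges weakly to $\mu=S_*\lambda$, where $\lambda$ is normalized Lebesgue measure on $([-1,1]^2)^3$. For a continuous test function $f$ on $\mathcal{M}_\Delta$, the integral $\int f\,d\mu_N$ is a Riemann sum of $f\circ S$ over the grid. The composition $f\circ S$ is bounded, and it is continuous away from the collinear locus, which has Lebesgue measure zero. The Riemann sums are taken over cubes of side $1/N$, so they converge to $\int f\circ S\,d\lambda$. If $\mathcal{M}_\Delta$ is taken to include degenerate triangles, $f\circ S$ is in fact continuous on the whole closed cube, which makes this step entirely routine.

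\textbf{Step 2.} By Step 1, the theorem reduces to showing $\mu\neq$ normalized area on $\mathcal{M}_\Delta$. Equivalently, I need one region $R\subset\mathcal{M}_\Delta$ whose boundary has measure zero for both measures and with $\mu(R)\neq \mathrm{Area}(R)/\mathrm{Area}(\mathcal{M}_\Delta)$. I take $R$ to be the set of obtuse triangles. On the $\mu$ side, $\mu(R)$ is the probability that three independent uniform points in a square form an obtuse triangle. This is a classical computation (Langford), giving $\frac{97}{150}+\frac{\pi}{40}\approx 0.725$; I would rederive it by fixing the obtuse vertex and integrating over pairs of the other two points lying in a half-plane-type region. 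On the area side, I work in the planar domain $\{a+b+c=1,\ a,b,c<\tfrac12\}$, or its quotient by permutations, which does not change proportions. There the obtuse region is cut out by the three conics $a^2>b^2+c^2$ and its permutations. Its area fraction is an elementary integral, which I expect to be an explicit number involving $\sqrt2$ or logarithms rather than $\pi$, and distinct from $0.725$.

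\textbf{Main obstacle.} The hardest part is the exact evaluation in Step 2, and in particular making sure the two numbers genuinely differ. If the square computation proves too messy, my fallback is a local comparison. I would show that the density of $\mu$ with respect to area behaves differently near two points of $\mathcal{M}_\Delta$, for instance near the degenerate boundary versus near the equilateral point. Near a degenerate shape, the density of $\mu$ can be estimated by a thin-triangle computation, and it tends to a nonzero profile that is not constant along the boundary (it depends on where the middle vertex sits). That alone already contradicts uniformity.
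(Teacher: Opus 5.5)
Your main line is the paper's own argument. You pass from lattice triangles to three uniform random points, then test equidistribution on the obtuse region $\mathcal{O}$ and compare Langford's value $\frac{97}{150}+\frac{\pi}{40}\approx 0.7252$ with the area fraction of $\mathcal{O}$. Your Step 1 is more careful than the paper's one-line appeal to Example (ii). One small correction there: $f\circ S$ is discontinuous on the null set where all three points coincide, which is harmless for the Riemann-sum argument.

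The gap is that the proof, as written, stops exactly where the theorem is decided: you only \emph{expect} the area fraction to differ from $0.725$. That number has to be computed; it is the content of Lemma~\ref{lem:muobt}. With $a+b+c=2$ and coordinates $(a,b)$, the condition $c^2>a^2+b^2$ becomes $ab-2a-2b+2>0$, i.e.\ $1-b<a<\frac{2(1-b)}{2-b}$. Hence
\[
\mu(O_c)=\int_0^1\frac{b(1-b)}{2-b}\,db=\tfrac32-2\ln 2 .
\]
The three disjoint regions $O_a,O_b,O_c$ therefore occupy the fraction $3(\tfrac32-2\ln 2)/\tfrac12=9-12\ln 2\approx 0.6822$ of $\mathcal{T}_\Delta$. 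The fraction is the same in your normalization $a+b+c=1$ and in the quotient by $S_3$. The gap of about $0.043$ between the two values proves the theorem. Distinctness comes from this numerical comparison, not from the observation that only one number involves $\pi$; making that observation rigorous would need a transcendence result.

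Your fallback is inaccurate as stated, but correcting it gives a genuinely different and simpler proof. The density of the limiting measure with respect to area does not tend to a finite profile along the degenerate edge $c=1$ of $\mathcal{M}_\Delta$. Suppose the longest side has length $L$ and the third vertex sits at height $h$ over it, with foot at fraction $t$ along it. Then $1-c\le \frac{h^2}{4L^2t(1-t)}$, so $1-c$ is of order $h^2$. Consequently the limiting proportion of triangles with $1-c<\varepsilon$ is at least of order $\sqrt{\varepsilon}$. To see this, take two vertices near opposite sides of the square and the third within distance $\asymp\sqrt{\varepsilon}$ of the segment joining them, near its midpoint. By contrast, the region $\{1-c<\varepsilon\}$ occupies only about $6\varepsilon$ of the area of $\mathcal{M}_\Delta$. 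So the density blows up like $(1-c)^{-1/2}$ near degenerate shapes, rather than tending to a nonzero profile. This contradicts equidistribution for small $\varepsilon$ without using either Langford's formula or the logarithmic integral. It is also consistent with the clustering near the sides visible in Figure~\ref{fig:plot}.
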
 

We were led to this result by computer experiments (see Figure \ref{fig:plot}).  It is surprising to us that although the limiting distribution is not uniform, it is \textit{close} to being so; we do not know a conceptual reason why this has to be the case.  It also remains to determine the exact limiting distribution function in $\mathcal{M}_\Delta$ -- see Question \ref{ques:dist}. The beautiful paper \cite{ES} proves an equidistribution result, using an alternative parametrization of $\mathcal{M}_\Delta$, and a different way of sampling points; it would be interesting to understand how it relates to this work. 

\medskip

In the final section, we shall conclude with some other open questions and directions for the enterprising reader to pursue.

\bigskip

\textbf{Acknowledgements.} This work was done as part of the RSI-India program for high-school students; we are grateful to the organizers. AA is grateful to SG and AKN for their guidance and support.  We thank Konstantin Delchev, who pointed out Langford's work, and Manjunath Krishnapur for helpful comments. We are grateful to the editors and anonymous referees, who pointed out an important correction in a previous version, and made numerous suggestions to improve the article.  We acknowledge the support of the Department of Science and Technology, Govt. of India grant no. CRG/2022/001822. There are no relevant financial or non-financial competing interests to report.

\bigskip

\section{Preliminaries}

\subsection{Lattice triangles are not equilateral}\label{subsection:LatticeTrianglesAreNotEquilateral}

As mentioned in the Introduction, although the following result is well-known, the following elementary proof is (to the best of our knowledge) new.

\begin{lem}\label{equi}
    If $\Delta$ is a lattice triangle, then $\Delta$ is not equilateral.
\end{lem}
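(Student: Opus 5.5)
We argue by contradiction, using infinite descent on the side length. Suppose some equilateral lattice triangle exists. After translating we may assume one vertex is the origin, so the other two vertices are lattice points $u=(a,b)$ and $v=(c,d)$. Among all such triangles we choose one whose squared side length $s=a^2+b^2$ is as small as possible; this is a positive integer. The strategy is to show that $a,b,c,d$ must all be even. Then $\tfrac12 u$ and $\tfrac12 v$ are again lattice points, and they form a smaller equilateral lattice triangle with squared side $s/4$. This contradicts minimality.

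The parity argument comes from two identities. The first uses the area. The shoelace formula gives area $\tfrac12|ad-bc|$, which is half an integer. The area of an equilateral triangle is $\tfrac{\sqrt3}{4}s$, and this is irrational when $s\neq 0$. That contradiction alone already finishes the proof. It may be the cleanest route, although the authors call their proof ``new'', so they probably have something else in mind.

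As a backup that avoids irrationality, we would work mod 2 and mod 4. The equal side conditions read
\[
a^2+b^2=c^2+d^2=(a-c)^2+(b-d)^2=s .
\]
Expanding the last one gives $2(ac+bd)=s$, so $s$ is even. Squares are $0$ or $1 \pmod 4$, so $s\equiv 0$ or $2\pmod 4$. Suppose $s\equiv 2\pmod 4$. Then $a,b,c,d$ are all odd, which makes $ac+bd$ even and hence $s=2(ac+bd)\equiv 0\pmod 4$, a contradiction. So $s\equiv 0\pmod 4$, which forces $a,b$ to be even and likewise $c,d$. This completes the descent.

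The main obstacle is making the mod-4 step airtight. In particular we must check that $s\equiv 0 \pmod 4$ really forces both coordinates of each vector to be even. It does: if both were odd the sum would be $2\pmod 4$, and if exactly one were odd the sum would be odd. I would present the descent as the primary proof and mention the irrationality of $\sqrt3$ via the area as a remark.
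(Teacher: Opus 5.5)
Your proof is correct, and it takes a different route from the paper. The paper puts one vertex at the origin and writes the third vertex explicitly as the rotation of $(x,y)$ by $60^\circ$, namely $\bigl(\tfrac12 x-\tfrac{\sqrt3}{2}y,\ \tfrac{\sqrt3}{2}x+\tfrac12 y\bigr)$. It then uses the irrationality of $\sqrt3$ to conclude that this is not a lattice point unless $x=y=0$.

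Your area remark rests on the same arithmetic fact but routes it through the shoelace formula: $\tfrac12|ad-bc|$ is rational while $\tfrac{\sqrt3}{4}s$ is irrational. It is shorter, and it is symmetric in the two non-origin vertices. So you never need to fix the orientation of the rotation; the paper chooses the counterclockwise one, which tacitly requires relabelling $B$ and $C$ if necessary.

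Your primary argument, the descent, is genuinely more elementary. It uses only the identity $s=2(ac+bd)$ and sums of two squares modulo $4$, and never mentions $\sqrt3$. The case analysis is airtight as written. If $s\equiv 2 \pmod 4$, all four coordinates are odd, so $ac+bd$ is even and $s\equiv 0\pmod 4$, a contradiction. If $s\equiv 0\pmod 4$, all four coordinates are even, and halving gives a smaller example.

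The descent buys a purely arithmetic ($2$-adic) explanation of the obstruction, and it shows exactly where planarity enters. In $\mathbb{Z}^3$ the triangle with vertices $0,(1,1,0),(1,0,1)$ is equilateral with $s=2$, because a sum of three squares can be $\equiv 2\pmod 4$ with an even coordinate. The paper's rotation argument, in exchange, is the most geometric of the three and locates the failure concretely at the third vertex.

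One expository point: your sentence saying the parity argument ``comes from two identities,'' the first being the area, is misleading, since the descent never uses the area. Present the descent as a self-contained proof and keep the area computation as a separate remark.
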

\begin{proof}
    Assume otherwise, i.e.,\ $\Delta$ is an equilateral lattice triangle.  By a translation, we can assume that one of the vertices is the origin. Namely, let the vertices of $\Delta$ be
\[
A = (0, 0), \quad B = (x, y), \quad C = (m, n),
\]
where all coordinates \(x,y,m,n \in \mathbb{Z} \) and $A,B,C$ are distinct. 


Since $ABC$ is an equilateral triangle, 
the third vertex \( C \) must be obtained by rotating \( \overrightarrow{AB} \) by \( 60^\circ \) about point \( A \) (see Figure \ref{fig:lattice}). Since \( \overrightarrow{AB} = (x, y) \) this rotation yields
\[
C = (x, y)
\begin{pmatrix}
\cos60^\circ & -\sin60^\circ \\
\sin60^\circ & \cos60^\circ
\end{pmatrix}^T
=
\left(
\frac{1}{2}x - \frac{\sqrt{3}}{2}y,\ 
\frac{\sqrt{3}}{2}x + \frac{1}{2}y
\right).
\]

\begin{figure}
\centering
\includegraphics{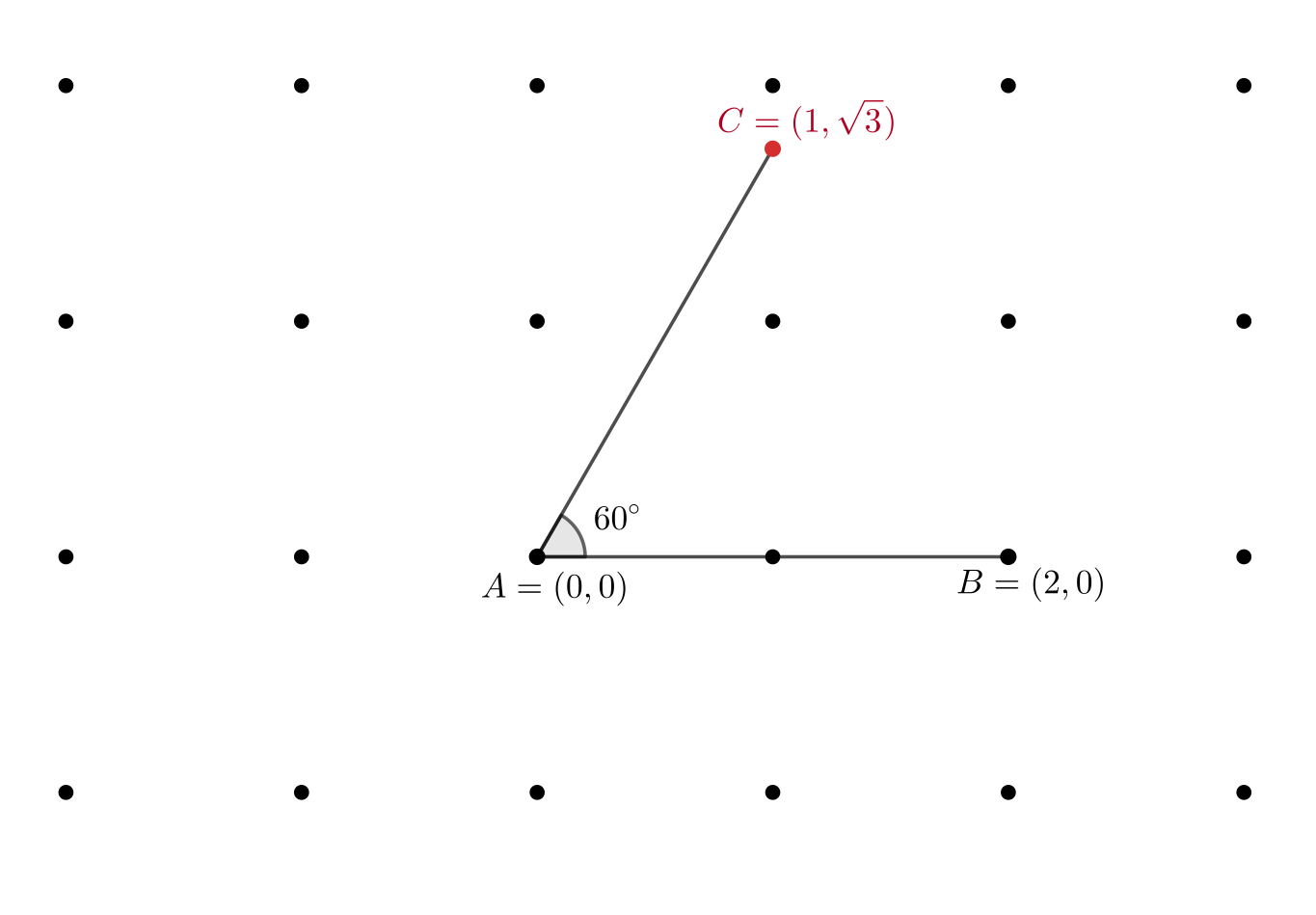}
\caption{Vector $\overrightarrow{AC}$ is obtained by rotating $\overrightarrow{AB}$ counterclockwise by $60^\circ$; if $A,B$ are lattice points, then $C$ is not.  }
\label{fig:lattice}
\end{figure}

Now since $\sqrt{3}$ is irrational, and \( x, y \) are integers, the coordinates of \( C \) will be irrational unless \( x = y = 0 \). However, \( x = y = 0 \) implies \( A = B \), which contradicts our assumption that the points are distinct. Therefore, \( C \) cannot lie in \( \mathbb{Z}^2 \), and our proof is complete. 
\end{proof}

\noindent \textit{Remark.} In fact, one can show that the only regular polygon with vertices in the integer lattice, is the square. (See, for example,  \cite{HamkinsBlog}.) One characterization  of lattice triangles is that they are exactly the triangles for which $\tan(\alpha)$ is  \textit{rational} for each interior angle $\alpha$ (see \cite{Beeson}).

\subsection{Moduli space of triangles}
\label{subsection:ModuliSpaceOfTriangles}
We say two triangles $T_1$ and $T_2$ in the Euclidean plane $\mathbb{R}^2$ are \textit{similar} if one can be transformed into the other by similarity, i.e.,\ a rigid motion (a composition of a translation,  rotation and/or reflection) composed by a scaling. The moduli space of triangles $\mathcal{M}_\Delta$  is the set of equivalence classes of triangles for the equivalence relation $\sim$, where $T_1 \sim T_2$ if there is a similarity that takes $T_1$ to $T_2$. In this section we shall describe how to identify this moduli space with a region  on the plane (which is also a triangle!) -- see Figure \ref{fig:IsoscelesAndEquilateral}.

\medskip

 We shall be using the side-lengths of a triangle to pin down its equivalence class; for this we need the following fact in Euclidean geometry which is well-known (and we omit a proof).

\begin{lem}\label{euc}
\begin{itemize}
\item[(a)] The side-lengths $a,b,c \in \mathbb{R}_+$ of a triangle $T$ in $\mathbb{R}^2$ satisfy the triangle inequalities: $a+b>c$, $b+c>a$ and $c+a >b$. 


\item[(b)] Two triangles $T_1$ and $T_2$ in $\mathbb{R}^2$ are similar if and only if their side-lengths are proportional with respect to the same constant, namely if the side-lengths of one is $(a,b,c)$, then the other has side-lengths $(\lambda a, \lambda b, \lambda c)$ for some $\lambda >0$. 
\end{itemize} 

\end{lem}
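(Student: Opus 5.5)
Although the paper states that it omits the proof, I would argue both parts directly from the Euclidean distance and elementary plane geometry.

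For part (a), place the triangle with vertices $A,B,C$ and write $a=|BC|$, $b=|CA|$, $c=|AB|$. The triangle inequality for the Euclidean norm gives $|AB|\le |AC|+|CB|$, with equality exactly when $C$ lies on the segment $AB$. A nondegenerate triangle has non-collinear vertices, so the inequality is strict: $c<a+b$. The other two inequalities follow by permuting the roles of the vertices. The only point needing care is the equality case. I would handle it by noting that equality in $|u+v|\le|u|+|v|$ (from Cauchy--Schwarz) forces $u$ and $v$ to be nonnegative multiples of each other, which would make $A,B,C$ collinear.

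For part (b), the forward direction is immediate. A rigid motion preserves distances and a scaling by $\lambda$ multiplies every distance by $\lambda$, so similar triangles have proportional side-lengths, labelled compatibly via the correspondence of vertices.

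For the converse, suppose $T_2$ has sides $(\lambda a,\lambda b,\lambda c)$.
\begin{itemize}
\item First scale $T_2$ by $1/\lambda$ to reduce to the case $\lambda=1$, i.e.\ the SSS congruence statement.
\item Translate so that $A_1=A_2$ is the origin.
\item Rotate so that $B_1=B_2=(c,0)$ lies on the positive $x$-axis.
\item The third vertex $C=(x,y)$ must then satisfy $x^2+y^2=b^2$ and $(x-c)^2+y^2=a^2$. Subtracting these gives $x=(b^2+c^2-a^2)/(2c)$, so $x$ is determined, and then $y=\pm\sqrt{b^2-x^2}$.
\item A reflection across the $x$-axis, if needed, makes the two choices of sign agree.
\end{itemize}
So $C_1=C_2$, and the composition of these maps is a similarity taking $T_1$ to $T_2$.

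The main obstacle is this last step: verifying that the third vertex is pinned down up to the single reflection ambiguity. The algebra above handles it cleanly. Everything else consists of routine checks that translations, rotations, reflections and scalings act on distances as claimed.
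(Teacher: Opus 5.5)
Your proof is correct. The paper does not prove this lemma; it calls it well-known and omits the argument, so there is no approach to compare yours against. What you give is the standard proof: strictness of the triangle inequality from the equality case of Cauchy--Schwarz for (a), and SSS congruence for (b), obtained by normalizing to $A=(0,0)$, $B=(c,0)$ and solving for $C$ up to a reflection.

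Two pieces of bookkeeping are worth stating explicitly:
\begin{itemize}
\item Label the vertices of $T_2$ so that $|B_2C_2|=\lambda a$, $|C_2A_2|=\lambda b$ and $|A_2B_2|=\lambda c$. This is possible because only the side-lengths are prescribed.
\item The required similarity is $g^{-1}\circ f$, where $f$ and $g$ are the normalizing maps you applied to $T_1$ and $T_2$. This is again a rigid motion followed by a scaling, since $Q(\mu x)+t=\mu\,(Qx+t/\mu)$ lets you move every scaling to the outside.
\end{itemize}
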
 

\medskip






We first introduce another space of triangles with \textit{labelled} edges, that we call the \emph{Teichm\"{u}ller space of triangles}, denoted by  $\mathcal{T}_\Delta$. More formally,  $\mathcal{T}_\Delta$ is the set of triangles in the plane with sides labelled (by, say $\{1,2,3\}$) up to the following equivalence: $T_1 \sim T_2$ if there is a similarity that takes $T_1$ to $T_2$ and preserves the labels on sides (i.e., takes edge-$i$ of $T_1$ to edge-$i$ of $T_2$, for each $i=1,2,3$).

Note that $\mathcal{T}_\Delta$ is a bigger space than $\mathcal{M}_\Delta$, in the sense that there is a surjective map $\pi:\mathcal{T}_\Delta \to \mathcal{M}_\Delta$  obtained by forgetting the labelling. For instance, an isosceles triangle has a reflection that is a self-map interchanging the two equal sides; as labelled triangles, these would be different points in $\mathcal{T}_\Delta$, but these would project to the same point in $\mathcal{M}_\Delta$.  These terms are inspired by the moduli space and Teichm\"{u}ller space of Riemann surfaces (see \cite{AcuteT} for a recent paper that explores this analogy). 

\medskip
In the Lemma below we use the three side-lengths (and Lemma \ref{euc}) to describe the latter  space $\mathcal{T}_\Delta$  in terms of three real parameters with a cyclic symmetry. (Here, although we say three parameters, we normalize their sum to equal $2$ by a rescaling, that makes the space two-dimensional.)  The moduli space $\mathcal{M}_\Delta$ can then be described as a quotient of $\mathcal{T}_\Delta$  by the symmetry (of relabelling the sides) -- see Corollary \ref{cor:param}. (See \cite[\S1.9]{Behrend}, or \cite{Stewart} for an equivalent account.)

These  parametrizations equip each of these spaces with a topology, namely two points in $\mathcal{T}_\Delta$ (or $\mathcal{M}_\Delta$) are ``close" if their corresponding parameters are close (as real numbers). This is essential as the notion of ``dense" in the statement of Theorem \ref{thm:main} is with respect to this topology. 

\medskip

\begin{lem}\label{param} We have the parametrization
    $$\mathcal{T}_\Delta \cong\{ (a,b,c) \in \mathbb{R}_+^3\ \vert\ \ a, b, c<1 \text{ and } a+b+c =2\}.$$
    
In other words, any triple of positive real numbers in the set above corresponds to a unique point in $\mathcal{T}_\Delta$, and vice versa. 
\end{lem}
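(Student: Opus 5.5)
We will construct explicit maps in both directions between $\mathcal{T}_\Delta$ and the set $S=\{(a,b,c)\in\mathbb{R}_+^3 \mid a,b,c<1,\ a+b+c=2\}$, and check that they are mutually inverse.

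\emph{From triangles to $S$.} Take a labelled triangle $T$ whose sides $1,2,3$ have lengths $\ell_1,\ell_2,\ell_3>0$. Set
\[
(a,b,c)=\frac{2}{\ell_1+\ell_2+\ell_3}(\ell_1,\ell_2,\ell_3).
\]
This triple has sum $2$ by construction. By Lemma \ref{euc}(b), a label-preserving similarity rescales all three lengths by a common $\lambda>0$, and that factor cancels in the normalization; so the map is well defined on $\mathcal{T}_\Delta$. To see that the image lies in $S$, note that the triangle inequality $b+c>a$ from Lemma \ref{euc}(a) (which survives the positive rescaling) becomes $2-a>a$, that is, $a<1$. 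The same argument gives $b<1$ and $c<1$.

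\emph{Injectivity.} Suppose two labelled triangles give the same triple. Then their side-lengths are proportional, with $\lambda$ equal to the ratio of the perimeters, and this proportionality respects the labels. The ``if'' direction of Lemma \ref{euc}(b) then gives a similarity between them. We need that similarity to send edge-$i$ to edge-$i$. Any similarity scales lengths by one constant. If two edges of one triangle had equal length, they could be swapped, but then the labelled triangles are still related by some similarity matching the labels. To make this airtight, I would build the similarity explicitly with SSS congruence: place the labelled triangles so that edge-$1$ is matched to edge-$1$ and the vertex opposite lies on the same side. SSS then forces the remaining vertex to coincide, so the labels are preserved.

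\emph{Surjectivity.} Given $(a,b,c)\in S$, we need a triangle with side-lengths $a,b,c$. Positivity and $a<1=(a+b+c)/2$ give $a<b+c$, and similarly for $b$ and $c$, so the strict triangle inequalities hold. Construct the triangle explicitly: place vertices at $(0,0)$ and $(a,0)$, and take the third vertex at the intersection of the circles of radius $c$ and $b$ about these points. Solving gives $x=(a^2+c^2-b^2)/(2a)$ and requires $c^2-x^2>0$. This condition factors as
\[
(a+b+c)(-a+b+c)(a-b+c)(a+b-c)>0,
\]
which is Heron's expression and holds by the triangle inequalities. Label the sides accordingly; its perimeter is $2$, so it maps back to $(a,b,c)$.

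The only step that needs care is this converse of Lemma \ref{euc}(a), namely the existence of the triangle. I expect to handle it with the explicit coordinates and the factorization above. The rest is bookkeeping of labels and the common scaling factor.
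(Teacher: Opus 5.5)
Your proof is correct and follows the paper's route: both directions reduce to the triangle inequalities through $a+b=2-c$, so that $a+b>c$ is equivalent to $c<1$. You also fill in what the paper leaves implicit, namely the existence of a triangle with prescribed side-lengths (your coordinates and Heron factorization) and the fact that equal normalized labelled lengths yield a \emph{label-preserving} similarity. In that SSS step, make sure the rigid motion sends the endpoint of edge-$1$ shared with edge-$2$ to its counterpart, reflecting across edge-$1$ afterwards if needed. Matching the endpoints the other way makes the third vertices coincide only when edges $2$ and $3$ have equal length.
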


\begin{proof}
    Let $(a,b,c)$ be such that $ a,b,c<1 \text{ and } a+b+c =2$. 
    We need to prove that there exists a triangle with $(a,b,c)$ as side lengths. It is enough to prove that the tuple $(a,b,c)$ satisfies the triangle inequalities.

    Now, since $a+b+c = 2$, we derive 
    $$ a + b =  a+ (2 - c - a)  =  2 - c > c$$
   since $ c<1$.
   
    By the cyclic symmetry of $a,b$ and $c$, the same argument works for the remaining triangle inequalities. This implies that  $$\{ (a,b,c) \in \mathbb{R}_+^3\ \vert\ \ a, b, c<1 \text{ and } a+b+c =2\} \subseteq \mathcal{T}_\Delta.$$

    To prove the other direction, consider a triangle $T$ with side lengths $a, b ,c$. By scaling if needed, we can assume that $a+b+c=2$. Using the triangle inequalities, we will prove that $a,b,c < 1$. Observe that
    \begin{align*}
        a + b &> c  \implies a + (2- c -a) > c  \implies 2 - c > c  \implies c < 1
    \end{align*}

    and as before, by symmetry,  the same argument yields $a<1$ and $b<1$. This implies that 
     $$\mathcal{T}_\Delta \subseteq \{ (a,b,c) \in \mathbb{R}_+^3\ \vert\ \ a, b,  c<1 \text{ and } a+b+c =2\}$$
     and we are done. \end{proof}

\begin{cor}\label{cor:param}  
The moduli space of triangles $\mathcal{M}_\Delta$ is the quotient space $\mathcal{T}_\Delta/S_3$ where $S_3$ is the group of permutations of a $3$-element set, and we have the parametrization 
       $$\mathcal{M}_\Delta \cong \{ (a,b,c) \in \mathbb{R}_+^3\ \vert\ \ a\leq b\leq c<1 \text{ and } a+b+c =2\}.$$
\end{cor}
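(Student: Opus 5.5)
The plan is to deduce Corollary \ref{cor:param} from Lemma \ref{param}, by studying how the forgetful map $\pi:\mathcal{T}_\Delta \to \mathcal{M}_\Delta$ interacts with relabelling of the sides.

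\textbf{Step 1: the $S_3$-action and the quotient.} The group $S_3$ acts on labelled triangles by permuting the labels $\{1,2,3\}$ on the sides. This action is well defined on similarity classes, because a label-preserving similarity $T_1\to T_2$ remains label-preserving after the same permutation is applied to both labellings. In the coordinates of Lemma \ref{param}, the action simply permutes the entries of the triple $(a,b,c)$. This preserves the conditions $a,b,c<1$ and $a+b+c=2$.

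Two points of $\mathcal{T}_\Delta$ have the same image under $\pi$ exactly when the underlying unlabelled triangles are similar. For the forward direction, suppose a similarity $T_1\to T_2$ exists, ignoring labels. It carries the sides of $T_1$ bijectively to the sides of $T_2$. Relabelling $T_2$ by the induced permutation makes this similarity label-preserving. Hence the two points differ by an element of $S_3$. The converse is immediate. Since $\pi$ is surjective, it descends to a bijection $\mathcal{T}_\Delta/S_3 \to \mathcal{M}_\Delta$.

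\textbf{Step 2: a fundamental domain.} Each $S_3$-orbit of a triple $(a,b,c)$ contains exactly one triple whose entries are sorted in non-decreasing order, namely the sorted rearrangement. When entries coincide, the sorted triple is still unique, although several permutations produce it. So the orbits are in bijection with
\[
\{(a,b,c)\in\mathbb{R}_+^3 : a\le b\le c<1,\ a+b+c=2\}.
\]
Here the condition $a,b<1$ is automatic from $a\le b\le c<1$. Combining this with Step 1 gives the stated parametrization.

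\textbf{Main obstacle.} The only subtle point is the orbit characterization in Step 1: we need that unlabelled similarity is exactly labelled similarity up to a permutation. This rests on the fact that a similarity induces a bijection on sides, which is clear. As a cross-check, Lemma \ref{euc}(b) confirms the conclusion directly: two triangles are similar exactly when their normalized side-length multisets agree.
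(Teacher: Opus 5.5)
Your proposal is correct and follows essentially the same route as the paper: you identify $\pi$ as the quotient by the $S_3$-action, which permutes the coordinates of $(a,b,c)$ under Lemma \ref{param}, and then take the sorted rearrangement as the orbit representative. You also supply two details the paper leaves implicit: that the fibers of $\pi$ are exactly the $S_3$-orbits, and that the sorted representative is unique, not merely existent.
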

\begin{proof} Let $\pi:\mathcal{T}_\Delta \to \mathcal{M}_\Delta$ be the projection map obtained by forgetting the labelling. This is the quotient map by the action of $S_3$ on $\mathcal{T}_\Delta$ that acts by permutations of the labellings. Alternatively, in the  above parametrization of $\mathcal{T}_\Delta$, the group $S_3$ acts on triples $(a,b,c)$ by permuting the coordinates.  The parametrization of $\mathcal{M}_\Delta $ is then immediate from the fact that we can always choose a permutation so that the entries satisfy $a\leq b\leq c$. 
\end{proof}



    


\textit{Remark.} The above parametrizations also describe these spaces as planar regions; in particular, since $c=2-a-b$, it can be written in terms of the $a,b$ parameters, one can identify $\mathcal{T}_\Delta$ with its projection to the $ab$-plane -- see Figure~\ref{fig:ModuliSpaceofTriangles}. 
Thus,
\begin{equation}\label{ab-param}\mathcal{T}_\Delta \cong \{ (a,b) \in \mathbb{R}_+^3\ \vert\ \ a, b<1 \text{ and } a+b>1\}
\end{equation}
since $c < 1$ if and only if $a+b >1$. 
We shall use this description in the final section. For completeness, the projection also provides the identification $\mathcal{M}_\Delta \cong \{(a,b) \in \mathbb{R}^3_+\ \vert\ a\leq b, a+ 2b\leq 2 \text{ and } a+b>1\} $ which is also derived in \cite{Gaspar}.

\begin{figure}
\centering
\includegraphics{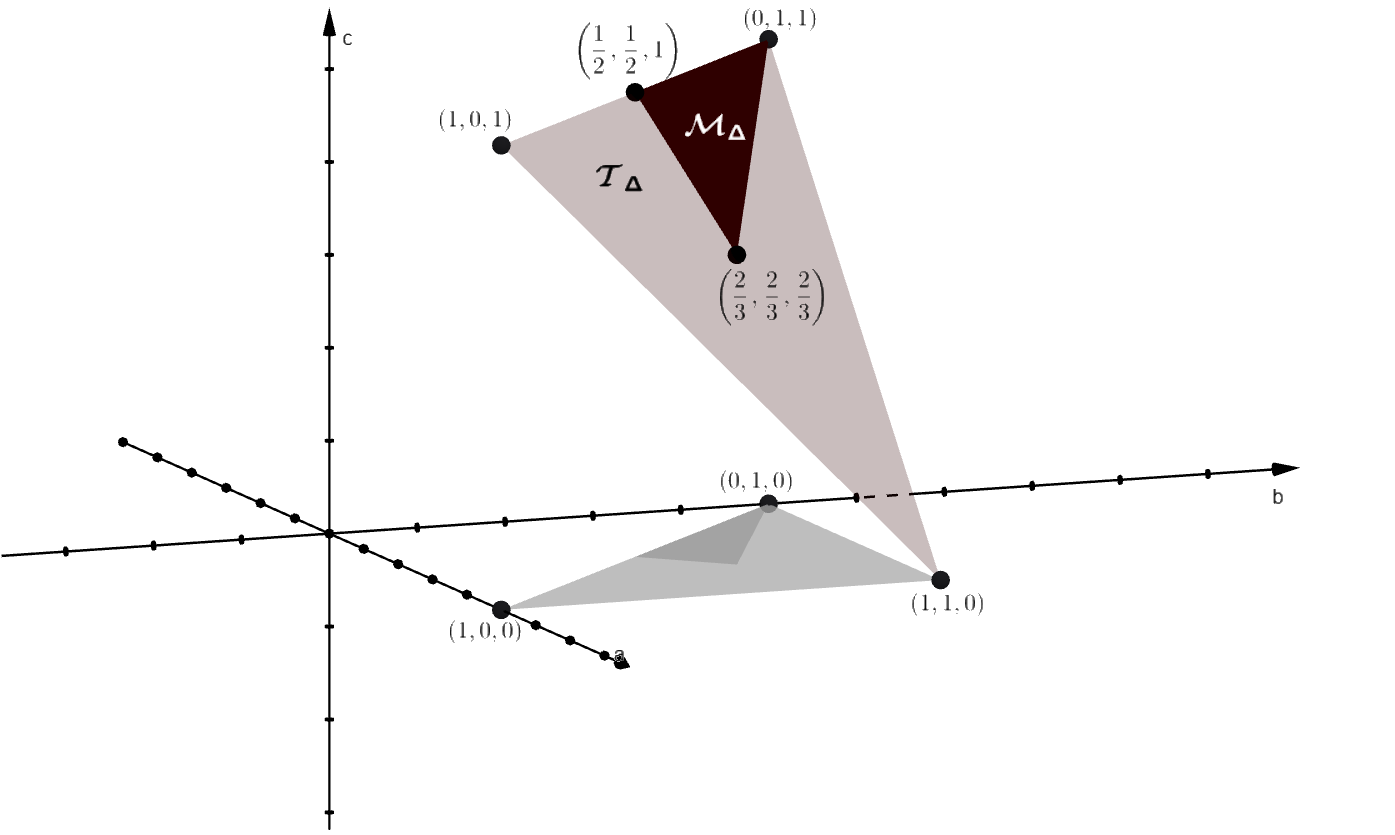}
\caption{The Teichm\"{u}ller space of triangles $\mathcal{T}_\Delta$ as parameterized in Lemma \ref{param}, together with its projection to the $ab$-plane. The subset corresponding to $\mathcal{M}_\Delta$ is also shown, shaded darker.}
\label{fig:ModuliSpaceofTriangles}
\end{figure}

\medskip 

In this article, we are interested in understanding the subset of this moduli space that corresponds to lattice triangles. We now precisely describe how a  lattice triangle determines a point in $\mathcal{M}_\Delta$; note that in the following definition, normalizing with the \textit{semi-perimeter} ensures that $a+b+c =2$, so the triple $(a,b,c)$ indeed lies in $\mathcal{M}_\Delta$ as parametrized in Corollary \ref{cor:param}.

\begin{defn}[Lattice triangle $\to$ Points in $\mathcal{M}_\Delta$]\label{corr} A lattice triangle ${\Delta}$ determined by its set of vertices $A,B,C \in \mathbb{Z}^2$ specifies a unique point in $\mathcal{M}_\Delta$, namely the one given by $(a,b,c)$ where
\begin{equation}\label{eq:norm}
a=\frac{1}{s}\lVert B-A\rVert,\ b=\frac{1}{s}\lVert C-B\rVert,\ c=\frac{1}{s}\lVert A-C\rVert
\end{equation}
where $s = \frac{1}{2}(\lVert B-A\rVert + \lVert C-B\rVert + \lVert A-C\rVert)$ is the ``semi-perimeter", and we choose an ordering such that $a\leq b\leq c$. 
\end{defn}

\textit{Remark.} Corresponding to a lattice triangle $\Delta$, we usually obtain a set of $6$ points in $\mathcal{T}_\Delta$ corresponding to all the possible ways of assigning the three labels to the sides of $\Delta$. The exceptions are lattice triangles which are isosceles, in which case, because of the additional symmetry, it will determine $3$ such points in $\mathcal{T}_\Delta$ (see Figure~\ref{fig:IsoscelesAndEquilateral}). We use this symmetry to plot points in $\mathcal{T}_\Delta$ corresponding to lattice triangles, in Figure \ref{fig:plot}.

\begin{figure}
\centering
\includegraphics{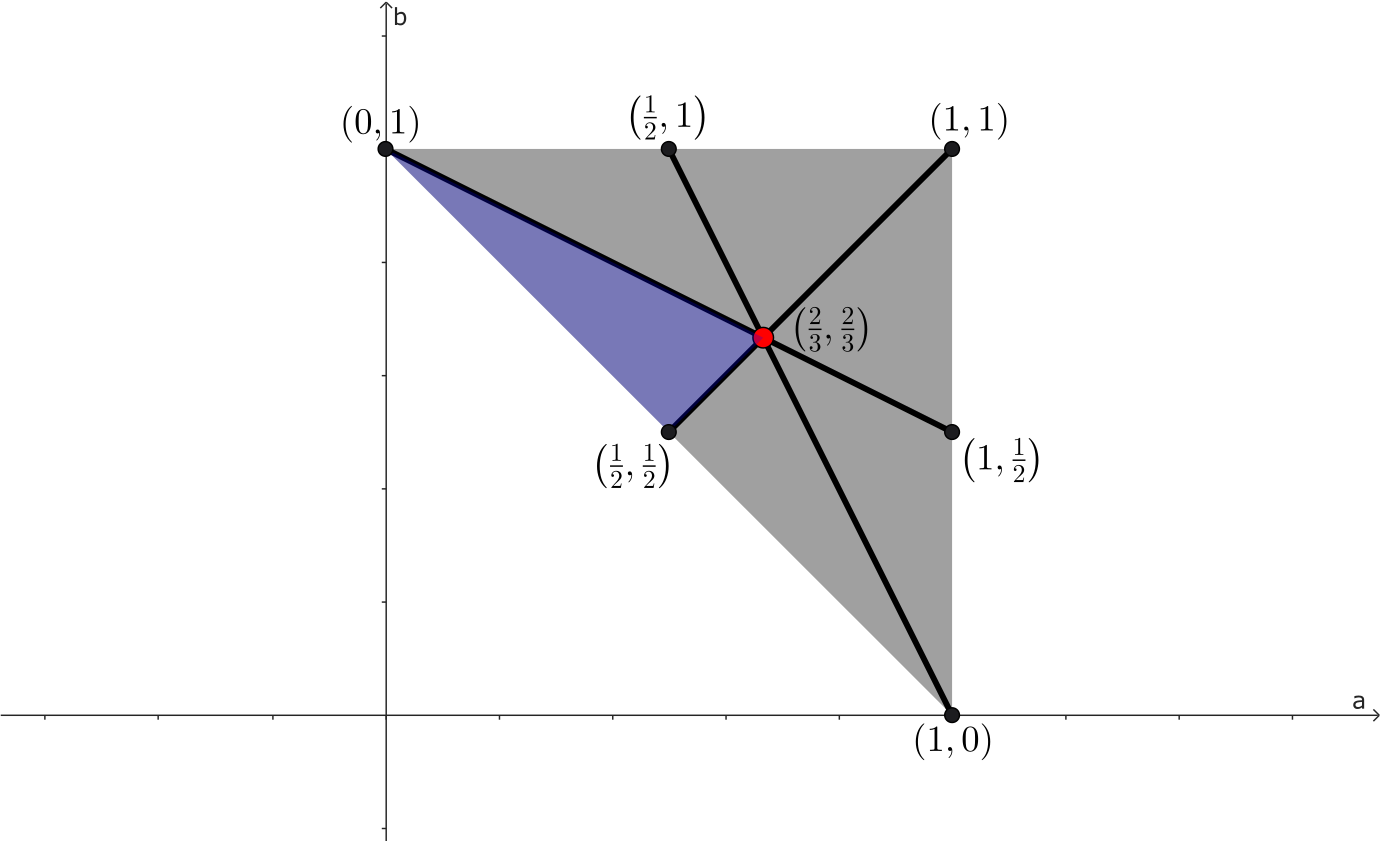}
\caption{The space $\mathcal{T}_\Delta$ is parametrized by two normalized side-lengths $a,b$, since the third side-length $c=2-a-b$. The loci of isosceles triangles (where two side-lengths are equal) are the three line segments (shown in bold); they meet at the equilateral triangle (shown in red). The fundamental domain of the action of the permutation group $S_3$ (shown shaded in blue), is the moduli space $\mathcal{M}_\Delta$.}
\label{fig:IsoscelesAndEquilateral}
\end{figure}

\subsection{Simultaneous Dirichlet approximation}
\label{subsection:SimultaneousDirichletApproximation}
Our main tool for approximating a similarity class of triangles with lattice triangles will be a variant of Dirichlet’s Approximation Theorem, which is about approximating an (irrational) real number by a rational.

\begin{thm}[Dirichlet]\label{dirich}
Given \( x \in \mathbb{R} \) and any $\varepsilon > 0$, there exist integers \( M , N\), such that $\left| Mx - N \right| < \varepsilon$.
\end{thm}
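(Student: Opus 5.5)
Note first that the statement as written is trivial, since we may take $M=N=0$. The intended content, and what I will prove, is the version with $M$ a positive integer, say $1\le M\le Q$ for a given $Q$. The plan is a pigeonhole argument on fractional parts. Fix $x\in\mathbb{R}$ and $\varepsilon>0$, and choose an integer $Q\ge 1$ with $1/Q<\varepsilon$. Consider the $Q+1$ real numbers
\[
\{kx\}=kx-\lfloor kx\rfloor\in[0,1),\qquad k=0,1,\dots,Q.
\]
Partition $[0,1)$ into the $Q$ half-open subintervals $[j/Q,(j+1)/Q)$ for $j=0,\dots,Q-1$.

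There are $Q+1$ points and only $Q$ boxes, so by the pigeonhole principle two indices $0\le k_1<k_2\le Q$ have $\{k_1x\}$ and $\{k_2x\}$ in the same subinterval. Hence $|\{k_2x\}-\{k_1x\}|<1/Q$. Now set
\[
M=k_2-k_1\in\{1,\dots,Q\},\qquad N=\lfloor k_2x\rfloor-\lfloor k_1x\rfloor\in\mathbb{Z}.
\]
Then $Mx-N=\{k_2x\}-\{k_1x\}$, so $|Mx-N|<1/Q<\varepsilon$, which proves the claim with $M\ge1$.

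There is no real obstacle here. The only points needing care are two. First, the intervals must be half-open, so that every fractional part lands in exactly one box. Second, the differences $M$ and $N$ must be integers, with $M$ nonzero; this is guaranteed because $k_1\neq k_2$. The same argument, applied to pairs of fractional parts in $[0,1)^2$ cut into $Q^2$ small squares using $Q^2+1$ multiples, will later give the two-variable simultaneous version. I would write the one-variable proof so that it transfers to that case verbatim.
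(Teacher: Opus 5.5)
Your proof is correct and is essentially the paper's approach: the paper omits the proof of this theorem, but it refers to the pigeonhole argument on fractional parts that it carries out for the simultaneous version ($N^2+1$ points in $N^2$ boxes of side $1/N$), and your argument is exactly the one-variable case of it. Your remark that the statement as written is trivial with $M=N=0$ is also apt, since the nontrivial content, which you prove, is that one can take $M\ge 1$, and this is what the paper's later constructions need to produce nondegenerate lattice triangles.
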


We omit the proof here, as we shall use the same idea (namely, the pigeonhole principle) in our proof of a slightly extended version below. We remark here that there are deeper results in number theory concerning how well real numbers are approximable by rationals (see, for example, \cite{DS}). 

\medskip


\begin{lem}[Simultaneous Dirichlet]\label{lem:SimultaneousDirichlet}
    Given real numbers \(x, y \in \mathbb{R}\) and any \(\varepsilon > 0\), there exists integers \(M, N_x, N_y\) such that
\(
\vert M x - N_x \vert < \varepsilon \text{ and } \vert M y - N_y \vert < \varepsilon
\).
\end{lem}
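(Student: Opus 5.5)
We will prove the statement with the pigeonhole principle applied to fractional parts, exactly as in the classical proof of Theorem \ref{dirich}. Fix $\varepsilon>0$ and choose a positive integer $K$ with $1/K < \varepsilon$. For a real number $t$ write $\{t\} = t - \lfloor t\rfloor \in [0,1)$ for its fractional part. For each integer $j = 0,1,\dots,K^2$ we form the point
\[
P_j = \bigl(\{jx\},\ \{jy\}\bigr) \in [0,1)^2 .
\]
This gives $K^2+1$ points in the unit square.

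Next we partition $[0,1)^2$ into the $K^2$ half-open subsquares $\bigl[\tfrac{p}{K},\tfrac{p+1}{K}\bigr)\times\bigl[\tfrac{q}{K},\tfrac{q+1}{K}\bigr)$, where $0\le p,q\le K-1$. There are $K^2+1$ points and only $K^2$ boxes, so by the pigeonhole principle two indices $0\le j<k\le K^2$ give points $P_j, P_k$ in the same box. In particular
\[
\bigl|\{kx\}-\{jx\}\bigr| < \tfrac1K \quad\text{and}\quad \bigl|\{ky\}-\{jy\}\bigr| < \tfrac1K .
\]

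We now set $M = k-j$, which is a positive integer, together with
\[
N_x = \lfloor kx\rfloor - \lfloor jx\rfloor, \qquad N_y = \lfloor ky\rfloor - \lfloor jy\rfloor .
\]
A direct computation gives
\[
Mx - N_x = (kx - \lfloor kx\rfloor) - (jx - \lfloor jx\rfloor) = \{kx\}-\{jx\}.
\]
Therefore $|Mx - N_x| < 1/K < \varepsilon$, and the same computation gives $|My-N_y|<\varepsilon$.

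The only point that needs care is ruling out the trivial solution $M=N_x=N_y=0$, which would make the statement vacuous. Choosing $j<k$ guarantees $M\ge 1$, and this is what makes the approximation useful later in the proof of Theorem \ref{thm:main}. As a by-product the argument also gives the quantitative bound $1\le M\le K^2$. No step here is a real obstacle; the main thing to set up correctly is the count of $K^2$ boxes against $K^2+1$ points.
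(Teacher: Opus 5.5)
Your proof is correct and follows the paper's argument essentially verbatim: you apply the pigeonhole principle to the $K^2+1$ points $(\{jx\},\{jy\})$ in $K^2$ boxes of side $1/K$, then take $M=k-j\ge 1$ and $N_x, N_y$ as differences of floors. Using half-open boxes is a small improvement, since it gives the strict bound $<1/K$ directly where the paper writes $\le 1/N<\varepsilon$.
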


\begin{proof}
Recall that for a real number $x$, $\lfloor x\rfloor$ denotes the largest integer \textit{not greater than} $x$, and $\{x\}:= x - \lfloor x \rfloor$ denotes the fractional part. 
Choose an integer $N$ such that  $\frac{1}{N} < \varepsilon$. 

We divide the unit square \([0,1]^2\) into \(N^2\) equal boxes, each of size \( \frac{1}{N} \times \frac{1}{N} \), so in particular the height and width of each box is less than \( \varepsilon \). Consider the points $P_0, P_1, \ldots, P_{N^2}$ defined as
\[
P_k := \left( \{kx\}, \{ky\} \right)\in [0,1]^2
\]
for $0\leq k \leq N^2$. 
These are $N^2+1$ points that lie in $N^2$ boxes. Therefore, by the pigeonhole principle, there exist indices, say \( 0 \leq i < j \leq N^2 \) such that $P_i, P_j$ lie in the same box $B$ of width and height $\frac{1}{N}$. As $P_i = \left(\{ix\}, \{iy\}\right)$ and $P_j = \left(\{jx\}, \{jy\}\right)$, we obtain 
\[\vert\{jx\} - \{ix\} \vert \leq \frac{1}{N} < \varepsilon \text{ and } \vert \{jy\} - \{iy\} \vert \leq \frac{1}{N} < \varepsilon. \]

Let  $M := j - i > 0$ and note that 
\begin{align*}
\vert M x - (\lfloor j x\rfloor - \lfloor i x\rfloor) \vert &= \vert (j - i) x - (\lfloor j x\rfloor - \lfloor i x\rfloor) \vert \\
                                                            &= \vert j x - \lfloor j x \rfloor - \left( i x - \lfloor i x \rfloor\right) \vert \\
                                                            &= \vert \{j x\} - \{i x\} \vert \\
                                                            &< \varepsilon. 
\end{align*}
Similarly, $\vert M y - (\lfloor j y\rfloor - \lfloor i y\rfloor) \vert < \varepsilon$. We take $N_x \coloneqq \lfloor j x\rfloor - \lfloor i x\rfloor$ and $N_y \coloneqq \lfloor j y\rfloor - \lfloor i y\rfloor$ to finish the proof. \end{proof}

\section{Proof of the Density result} 

\subsection{Approximating equilateral triangles}

We start with proving a special case of Theorem \ref{thm:main}, which uses the classical Dirichlet Approximation Theorem. That is, we shall prove that although a lattice triangle cannot be equilateral (see Lemma \ref{equi}), we have lattice triangles that \textit{approximate} an equilateral triangle arbitrarily well. In other words, points in $\mathcal{M}_\Delta$ corresponding to lattice triangles (see  Definition \ref{corr})  come arbitrarily close to $c_0 = \left(\frac{2}{3}, \frac{2}{3}, \frac{2}{3}\right)$. Here, we are using the parametrization in Corollary \ref{cor:param}, where $c_0$ represents the similarity class of an equilateral triangle (see Figure \ref{fig:IsoscelesAndEquilateral}). In what follows, the distance between two triples $(a,b,c)$ and $(a^\prime, b^\prime, c^\prime)$ is the usual Euclidean norm $\lVert (a,b,c) - (a^\prime, b^\prime, c^\prime)\lVert \ := \sqrt{(a-a^\prime)^2 + (b-b^\prime)^2 + (c-c^\prime)^2}$.

\begin{prop}
    There is sequence of lattice triangles $\{\Delta_n\}_{n\geq 1}$ such that the corresponding set of points in $\mathcal{M}_\Delta$ has  $c_0= \left(\frac{2}{3}, \frac{2}{3}, \frac{2}{3}\right)$ as an accumulation point. 
\end{prop}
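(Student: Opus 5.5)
Following the hint at the start of this subsection, the plan is to mimic the proof of Lemma \ref{equi}, replacing the irrational $\sqrt{3}$ by a rational approximation supplied by Dirichlet's theorem (Theorem \ref{dirich}). Fix vertices $A_n=(0,0)$ and $B_n=(2q_n,0)$, where $q_n$ is a positive integer chosen below. The third vertex of the true equilateral triangle on $A_nB_n$ would be $(q_n, q_n\sqrt{3})$, which is not a lattice point. We therefore take $C_n=(q_n,p_n)$, where $p_n$ is the integer nearest to $q_n\sqrt{3}$. Since $C_n$ lies on the perpendicular bisector of $A_nB_n$, the triangle $\Delta_n=A_nB_nC_n$ is isosceles. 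Its side-lengths are $2q_n$, $\sqrt{q_n^2+p_n^2}$ and $\sqrt{q_n^2+p_n^2}$. The points are not collinear as long as $p_n\neq 0$, and this holds for $q_n\ge 1$.

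Next, choose $q_n$. By Theorem \ref{dirich} applied with $x=\sqrt{3}$ and $\varepsilon=1/n$, there are integers $M,N$ with $|M\sqrt{3}-N|<1/n$. Since $\sqrt 3$ is irrational we have $M\neq 0$, and after changing signs we may assume $M>0$. Set $q_n=M$ and $p_n=N$. Then
\[
\sqrt{q_n^2+p_n^2}=\sqrt{4q_n^2+O(q_n/n)},
\]
so the ratio of the slanted side to the base is $\sqrt{1+O(1/(nq_n))}\to 1$. The ratio in fact tends to $1$ even without Dirichlet, since the rounding error $|p_n-q_n\sqrt3|\le 1/2$ is negligible compared with $q_n$ as $q_n\to\infty$. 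So Dirichlet's theorem is convenient here but not strictly essential.

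Then normalise by the semi-perimeter $s_n=q_n+\sqrt{q_n^2+p_n^2}$ as in Definition \ref{corr}. Writing $r_n=\sqrt{q_n^2+p_n^2}/(2q_n)\to 1$, the normalised triple is, up to ordering, $\left(\frac{2}{1+2r_n},\frac{2r_n}{1+2r_n},\frac{2r_n}{1+2r_n}\right)$. Each entry tends to $2/3$ by continuity, so the distance to $c_0$ tends to $0$. Reordering the entries does not change this distance, because $c_0$ is symmetric.

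Finally, to obtain an accumulation point rather than merely a sequence converging to $c_0$, we need infinitely many distinct points. Lemma \ref{equi} ensures that no $\Delta_n$ is exactly equilateral, so each point differs from $c_0$. Passing to a subsequence along which the distances to $c_0$ strictly decrease gives infinitely many distinct points accumulating at $c_0$. The only real obstacle is this bookkeeping of the error estimate and the distinctness; the construction itself is routine.
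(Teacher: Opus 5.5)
Your argument is essentially the paper's proof, using the same lattice triangle $(0,0)$, $(2M,0)$, $(M,N)$ with $|M\sqrt{3}-N|<\varepsilon$ from Theorem \ref{dirich}, and it is more careful: you compute the normalized triple explicitly and use Lemma \ref{equi} to get a genuine accumulation point. The one faulty step is ``since $\sqrt{3}$ is irrational we have $M\neq 0$'', because $(M,N)=(0,0)$ trivially satisfies $|M\sqrt{3}-N|<1/n$; you need Dirichlet's theorem in its standard form with $M\geq 1$ (exactly what the pigeonhole argument gives, cf.\ $M=j-i>0$ in the proof of Lemma \ref{lem:SimultaneousDirichlet}), or simply your own rounding construction with $q_n=n$, which avoids Dirichlet altogether.
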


\begin{proof}

For each integer $n \geq 1$, we construct an equilateral triangle $E_n$ corresponding to the point $c_0 \in \mathcal{M}_{\Delta}$. Define $E_n$ to be the equilateral triangle in $\mathbb{R}^2$ with vertices $(0,0), (2n, 0)$ and $(n, n \sqrt{3})$. Every side of $E_n$ has length $2n$ and dividing by the semi-perimeter we see that side lengths of $E_n$ give us the tuple $c_0$. Note that $E_n$ is not a lattice triangle for any $n$. 

By Theorem~\ref{dirich}, we know that for any $\varepsilon > 0$, there exists integers $M_\varepsilon, N_\varepsilon$ such that \[\vert M_\varepsilon \sqrt{3} -  N_\varepsilon \vert < \epsilon.\]

Let the lattice triangle given by the vertices $(0,0), (2M_\varepsilon, 0), (M_\varepsilon, N_\varepsilon)$ be denoted by $\Delta_\varepsilon$. 
Let $\epsilon_k$ be a sequence such that $\epsilon_k \rightarrow 0$ as $k\to \infty$. Then, the distance between $\Delta_{\epsilon_k}$ and $E_{M_{\epsilon_k}}$ equals $\lvert M_{\epsilon_k} \sqrt{3} - N_{\epsilon_k} \rvert \to 0 $ as $k\to \infty$. This implies that the  sequence of lattice triangles $\Delta_{\epsilon_k}$ comes arbitrarily close to $c_0$ and thus has $c_0$ as an accumulation point.
\end{proof}

\subsection{Approximating general triangles}

Showing the subset of $\mathcal{M}_\Delta$ corresponding to lattice triangles is dense is equivalent to proving that for any tuple $\left (a, b, c\right) \in \mathcal{M}_{\Delta}$ and for any $\varepsilon > 0$, there exists a lattice triangle $\Delta_\varepsilon$ whose corresponding point in $\mathcal{M}_\Delta$, given by  normalised edge-lengths,  is $\varepsilon$-close to $\left(a, b, c\right)$. Recall that a pair of tuples of real numbers are said to be $\epsilon$-close if  the Euclidean distance between them is less than $\varepsilon$. 

Let $T = (a, b, c) \in \mathcal{M}_\Delta$. If $T$ can be represented by a lattice triangle, we are done. Suppose not; we can assume, by applying a similarity if need be, that two of the vertices of $T$ are $A = (0,0)$ and $B =(1,0)$. Let the third vertex be $C = (x,y)$. 

Note that in contrast to the case of an equilateral triangle, in the general case \textit{both} $x$ and $y$ may be irrational numbers. We shall need here the ``simultaneous Dirichlet approximation" result that we proved in \S \ref{subsection:SimultaneousDirichletApproximation}.  That is, by Lemma~\ref{lem:SimultaneousDirichlet}, for any $\varepsilon > 0$, there exists integers $M, N_x, N_y$ such that 
\[ \vert M x - N_x \vert < \frac{\varepsilon}{\sqrt{2}} \text{ and } \vert M y - N_y \vert < \frac{\varepsilon}{\sqrt{2}}.\]

Now, consider the point $(Mx, My)$, then we see that
\[\lVert \left( Mx, My\right) - \left( N_x, N_y \right) \rVert  = \sqrt{(Mx - N_x)^2 + (My - N_y)^2}< \varepsilon\]

In other words, given an $\varepsilon > 0$, there exists a positive integer $M$ such that $(Mx, My)$ is $\varepsilon$-close to the lattice point $(N_x, N_y)$. Since the map \eqref{eq:norm} from vertices of triangle  to its normalized triple of edge-lengths $(a,b,c)$ is continuous, a small perturbation of a vertex produces a small change in the corresponding point in $\mathcal{M}_\Delta$. In particular, the point $T$ in $\mathcal{M}_\Delta$ that corresponds to the triangle with vertices $(0,0), (M, 0)$ and $(Mx, My)$ is $\varepsilon$-close to that corresponding to the lattice triangle formed by vertices $(0,0), (M,0)$ and $(N_x, N_y)$, and we are done. \qed

\section{Distribution in moduli space}

\subsection{The question of equidistribution}
\label{subsection:TheQuestionOfEquidistribution}
The Teichm\"{u}ller space of triangles $\mathcal{T}_\Delta$ parametrized in Lemma \ref{param} is equipped with a natural measure: the area measure in the $ab$-plane. Here, recall that $\mathcal{T}_\Delta$ is identified with the two-dimensional region given by \eqref{ab-param}, using the fact that the third side $c$ equals $2-a-b$. Then  for a subset  $S \subset \mathcal{T}_\Delta$, we define its measure $\mu(S) := \text{Area}(S)$. 

This equips $\mathcal{M}_\Delta$ with a measure, that we also denote by $\mu$, since the action of $S_3$ on $\mathcal{T}_\Delta$ (see Corollary \ref{cor:param}) is measure-preserving. The fact that it is measure-preserving can be seen as follows:  in the parametrization using normalized lengths $a,b,c$ given by Lemma \ref{param}, the action of $S_3$ permuting the labels of the sides simply permutes the coordinates  $(a,b,c)$ on the plane $a+b+c=2$ and preserves the Euclidean area on that plane. Moreover, the projection of any subset of $a+b+c=2$ to the $ab$-plane merely scales its measure by a scaling factor $1/\sqrt{3}$, and hence the action also preserves $\mu$.

\medskip 

In what follows, we shall need the \textit{total} measure of moduli space, that we now record. 
\begin{lem}\label{tot}
    The total measure $\mu(\mathcal{T}_\Delta)=\frac{1}{2}$, while the total measure $\mu(\mathcal{M}_\Delta)=\frac{1}{12}$.
\end{lem}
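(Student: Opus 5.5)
The plan is to compute both areas directly in the $ab$-plane, using the identification \eqref{ab-param}, and then pass from $\mathcal{T}_\Delta$ to $\mathcal{M}_\Delta$ using the $S_3$-symmetry.

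\textbf{The area of $\mathcal{T}_\Delta$.} By \eqref{ab-param}, $\mathcal{T}_\Delta$ is the region $\{0<a<1,\ 0<b<1,\ a+b>1\}$. This is the unit square $[0,1]^2$ with the closed triangle below the diagonal $a+b=1$ removed. Equivalently, it is the open triangle with vertices $(1,0)$, $(0,1)$ and $(1,1)$, whose legs both have length $1$. Hence
\[
\mu(\mathcal{T}_\Delta)=1-\tfrac12=\tfrac12 .
\]
Boundary lines have zero area, so strict versus non-strict inequalities do not matter.

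\textbf{The area of $\mathcal{M}_\Delta$.} I would argue by symmetry. The group $S_3$ acts on $\mathcal{T}_\Delta$ by permuting $(a,b,c)$, and this action preserves $\mu$, as noted just above the statement. The six images of the region $\{a\le b\le c\}$ are the regions cut out by the six orderings of the coordinates. They cover $\mathcal{T}_\Delta$, and any two of them overlap only along the isosceles loci $a=b$, $b=c$ or $c=a$, which are line segments and so have measure zero. Each image has the same measure, so
\[
\mu(\mathcal{M}_\Delta)=\tfrac16\cdot\tfrac12=\tfrac1{12}.
\]

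\textbf{Direct check.} As a cross-check, I would compute the area of the explicit region $\{a\le b,\ a+2b\le 2,\ a+b>1\}$ from the Remark. Its vertices are found by intersecting the boundary lines pairwise:
\begin{itemize}
\item $a=b$ and $a+b=1$ meet at $(\tfrac12,\tfrac12)$;
\item $a+b=1$ and $a+2b=2$ meet at $(0,1)$;
\item $a=b$ and $a+2b=2$ meet at $(\tfrac23,\tfrac23)$.
\end{itemize}
The shoelace formula for these three vertices gives
\[
\tfrac12\Bigl|\tfrac12\bigl(1-\tfrac23\bigr)+0\cdot\bigl(\tfrac23-\tfrac12\bigr)+\tfrac23\bigl(\tfrac12-1\bigr)\Bigr|
=\tfrac12\Bigl|\tfrac16-\tfrac13\Bigr|=\tfrac1{12},
\]
which agrees with the symmetry argument.

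The only step needing any care is justifying the decomposition into six pieces: showing that the fundamental domains tile $\mathcal{T}_\Delta$ and overlap only in null sets. The direct shoelace computation avoids that issue entirely.
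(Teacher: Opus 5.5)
Your proof is correct and follows the paper's argument. The area of the triangle with vertices $(1,0)$, $(0,1)$, $(1,1)$ gives $\mu(\mathcal{T}_\Delta)=\tfrac12$, and because the $S_3$-action preserves $\mu$, $\mathcal{M}_\Delta$ gets $\tfrac16$ of that, namely $\tfrac1{12}$. Your shoelace computation on the fundamental domain with vertices $(\tfrac12,\tfrac12)$, $(0,1)$, $(\tfrac23,\tfrac23)$ is a sound independent check that avoids the tiling detail the paper leaves implicit, but it is not needed.
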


\begin{proof}
    The first measure is the area of the triangle with vertices $(1,0), (0,1)$ and $(1,1)$ which is the projection of $\mathcal{T}_\Delta$ to the $ab$-plane (\textit{c.f.} Figure~\ref{fig:IsoscelesAndEquilateral}). The second measure is $1/6$-th of the former, since $\lvert S_3 \rvert = 6$ and $\mathcal{M}_\Delta = \mathcal{T}_\Delta/S_3$, and as explained above, the action of $S_3$ preserves $\mu$.
\end{proof}

In this section, we will try to examine the question of whether the points in $\mathcal{M}_\Delta$ corresponding to lattice triangles are distributed evenly with respect to this measure. To formulate this question more precisely, we first define the following notions of a ``weighted set" in a measure-space $X$, and its corresponding  ``Dirac measure". We need this as several different lattice triangles may have the same similarity type, so we would need to count \textit{with multiplicity}. For simplicity, we assume that the space $X$ is a planar subset, equipped with the area measure $\mu$, as it is in our setting.


\begin{defn}[Weighted set of points]\label{defn:wted} Given a space $X$, a (finite) weighted set of points $S$ is a finite subset $\{p_1,p_2,\ldots,p_n\} \subset X$, together with a corresponding set of  positive integers (the ``weights") $w_1,w_2,\ldots w_n$.  In particular, if  $\widehat{S}$ is a finite sequence of points in $X$, which are possibly not all distinct, then it defines a  weighted set $S$ 
by associating to each point of the (pairwise distinct) points, a weight equal to the multiplicity, i.e., number of times it appears in the sequence.  Moreover, we define the \emph{total weight} of $S$  to be $w(S) := \sum\limits_{i=1}^n w_i$, i.e., the sum of weights. 
\end{defn}

\begin{defn}[Dirac measure]\label{defn:dirac} For a space $X$, and a point $p\in X$ the Dirac measure  $\delta_p$ is defined by 
\[
\delta_p(A) = \begin{cases}
  0, & p \notin A, \\
  1, & p \in A
\end{cases}
\]
for any subset $A$. We extend this to a Dirac measure on a weighted set $S$, by defining
\begin{equation}
    \delta_S: = \sum\limits_{i=1}^n w_i \delta_{p_i}
\end{equation}where $p_1,p_2,\ldots,p_n$ are the points of $S$ with corresponding (positive integer) weights $w_1,w_2,\ldots, w_n$. 
\end{defn}

We may now define a notion of equidistribution of weighted sets in a planar region (also referred to as \textit{uniform} distribution of sequences, see \cite{Uniform-book}).

\begin{defn}[Equidistribution]\label{defn:equi} Let $X\subset \mathbb{R}^2 $ be a planar subset with area measure $\mu$, and let $\{S_n\}$ be a sequence of finite weighted subsets of $X$, where the total weights $w(S_n) \to \infty$. We say that this sequence \textit{equidistributes} in $X$ if the corresponding Dirac measures converge to $\mu$ after normalizing, that is,
\begin{equation}\label{eq:equi}
    \frac{1}{w(S_n)}\delta_{S_n}(R) \to \frac{\mu(R)}{\mu(X)}
\end{equation}
as $n\to \infty$, for any domain $R \subset X$.
\end{defn}
\medskip


\medskip

\textit{Examples.} (i) In one lower dimension, when $X = [0,1]$, an example of when equidistribution holds connects back to Dirichlet's theorem,  namely if $S_n = \{ \{j\sqrt 3\}\ \vert\ 1\leq j\leq n\} $, then it is a basic fact of ergodic theory due to Weyl that $S_n$ equidistributes in $[0,1]$. (See \cite{Weyl-wiki}.) 

(ii) If $X = [0,1] \times [0,1]$, and $S_N = \left\{\left(\frac{i}{N}, \frac{j}{N}\right)\ \vert\ 1\leq i,j\leq N\right\}$ for each $N\geq 1$, then it is easy to prove that the sequence $\{S_N\}$ equidistributes in $X$. 

\medskip

In our setting, we let $S_N$ be the weighted set of points of $\mathcal{M}_\Delta$ corresponding to lattice triangles with vertices inside the square $[-N,N] \times [-N,N]$, where the weight equals the ``multiplicity",  as in Definition \ref{defn:wted}. In particular, we consider the underlying set of (pairwise distinct) lattice triangles in $[-N,N]^2$, and assign a weight to each that is the number of  lattice triangles in the square that are in the same similarity class. We shall answer the following question.

\begin{ques}\label{ques:equi} 
    Do the weighted subsets $S_N$ described above equidistribute in $\mathcal{M}_\Delta$?
\end{ques}

 Our Theorem \ref{thm:nequi}, that we prove in the next section, says that this has a negative answer. It could be interesting to consider other ways of exhausting the set of lattice triangles (by using disks of increasing area, instead of squares), and we leave that to the interested reader. For other questions that are unanswered, see \S \ref{section:epilogue}.

\subsection{Proof of Theorem \ref{thm:nequi}}

Recall that a triangle with side-lengths $(a,b,c) \in \mathbb{R}_+$ is 
\begin{itemize}
    \item \textit{right-angled} if the tuple  satisfies $a^2 +b^2=c^2$, and
    \item \textit{obtuse} if $c^2>a^2 +b^2$,
\end{itemize}
where the right-angle in the former case, and the obtuse angle in the latter, is opposite the side of length $c$.

\medskip
We shall need the following result from \cite{Langford} (see also the expanded version in \cite{Langford2}). 

\begin{prop}\label{prop:langf} 
    If the three vertices of a triangle $\Delta$ are chosen randomly from a unit square with respect to the uniform measure, then the probability that $\Delta$ is obtuse equals $\frac{97}{150} + \frac{\pi}{40} \approx 0.72520648$.
\end{prop}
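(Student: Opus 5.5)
The plan is to compute the probability as an integral over configurations and reduce it to explicit area integrals. Choose the three vertices $P_1,P_2,P_3$ independently and uniformly in $[0,1]^2$. The triangle is obtuse precisely when one of its angles exceeds $\pi/2$. At most one angle can be obtuse, so these three events are disjoint and
\[
\Pr(\text{obtuse}) = 3\,\Pr(\angle P_1P_3P_2 > \pi/2).
\]
By Thales' theorem, the angle at $P_3$ is obtuse if and only if $P_3$ lies strictly inside the open disk with diameter $P_1P_2$. The desired probability is therefore
\[
3\int_{[0,1]^2}\int_{[0,1]^2} \mathrm{Area}\bigl(D(P_1,P_2)\cap[0,1]^2\bigr)\,dP_1\,dP_2 ,
\]
where $D(P_1,P_2)$ is the disk of radius $\tfrac12\lVert P_1-P_2\rVert$ centred at the midpoint of $P_1$ and $P_2$.

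If the disk were always contained in the square, the integrand would be $\frac{\pi}{4}\lVert P_1-P_2\rVert^2$. The expected value of $\lVert P_1-P_2\rVert^2$ for two uniform points in the unit square is $2\cdot\frac16=\frac13$. This term alone would contribute $3\cdot\frac{\pi}{4}\cdot\frac13=\frac{\pi}{4}$.

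The disk often sticks out of the square, however, so I will subtract the portions lying outside. Since the square is convex and both endpoints of the diameter lie in it, each overflow region is a circular segment cut off by one side of the square, or a corner region where two segments overlap. I will use the symmetries of the square (the dihedral group of order $8$) to reduce to the overflow across a single side, say the side $y=0$. I will add an inclusion–exclusion correction for corner overlaps.

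For the overflow across $y=0$, I will parametrize the disk by its centre $(u,v)$ and radius $r$. The segment area below $y=0$ is
\[
r^2\arccos(v/r)-v\sqrt{r^2-v^2}, \qquad \text{when } v<r.
\]
Next I will change variables from $(P_1,P_2)$ to the midpoint and the half-difference vector $(h\cos\theta,h\sin\theta)$, which has Jacobian constant $4$. The constraint that both endpoints lie in the square then becomes a set of linear constraints on the midpoint that depend on $\theta$. With this setup the integral becomes an iterated integral of elementary and inverse-trigonometric functions. I expect the rational part $\frac{97}{150}$ to come from polynomial terms, and the $\pi$-terms to combine with $\frac{\pi}{4}$ to give $\frac{\pi}{40}$.

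The main obstacle is carrying out this boundary-correction integral exactly. Specifically, the domain of integration, meaning the region where both endpoints lie in the square and the disk crosses the edge, splits into several polygonal cases depending on $\theta$ and on whether the disk also crosses a second edge. Tracking these cases without error is delicate. I would first try the alternative ordering: fix $P_3$ and compute the probability that $P_1$ and $P_2$ see $P_3$ at an obtuse angle, i.e. $\langle P_1-P_3,P_2-P_3\rangle<0$. For fixed $P_3$ this is a product-type integral over angular sectors, and it avoids the corner overlaps entirely. That route is what Langford uses in \cite{Langford,Langford2}, and for the present paper I would ultimately cite his computation rather than reproduce it.
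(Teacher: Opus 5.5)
Your proposal ends where the paper does: the paper gives no derivation of this proposition and simply cites Langford \cite{Langford,Langford2}, and your reduction (disjointness of the three events giving the factor $3$, the Thales disk, and the main term $3\cdot\frac{\pi}{4}\cdot\frac{1}{3}=\frac{\pi}{4}$) is a correct starting point. If you ever carry out the computation, the corner-overlap correction is zero: for $Q$ with $x<0$ and $y<0$, both $P_1-Q$ and $P_2-Q$ have positive coordinates, so $\langle P_1-Q,P_2-Q\rangle>0$ and $Q\notin D(P_1,P_2)$, and the same holds at the other corners; hence the overflow is four disjoint circular segments, and by symmetry its expectation is four times that of the segment below $y=0$.
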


The fact that it is more likely for random triangle to be obtuse  (for various interpretations of the word ``random") has had a long and interesting history -- see, for example, \cite{Portnoy}.

\medskip

Note that in the parametrization of $\mathcal{T}_\Delta$ given by Lemma \ref{param}, the subset corresponding to obtuse triangles with obtuse angle opposite to $c$ is the  region $$O_c = \{ (a,b,c)\in \mathbb{R}_+^3\ \vert\ a,b,c<1, a+b+c=2, c^2 >a^2+b^2\}.$$ There are similar regions $O_a$, $O_b$ that are symmetric with respect to a cyclic permutation of labels (see Figure~\ref{fig:ObtuseTriangles}).

The subset of similarity classes of obtuse triangles with edge-labellings is $O_a \cup O_b \cup O_c$, which is invariant under the action of $S_3$ (that acts by permuting coordinates), and determines the subset $\mathcal{O}$ of similarity classes of obtuse triangles in $\mathcal{M}_\Delta$. Since this action is also measure-preserving, the measure $\mu(\mathcal{O}) = \frac{1}{6} \cdot \mu(O_a \cup O_b\cup O_c) $.

\medskip 



\begin{figure}
\centering
\includegraphics{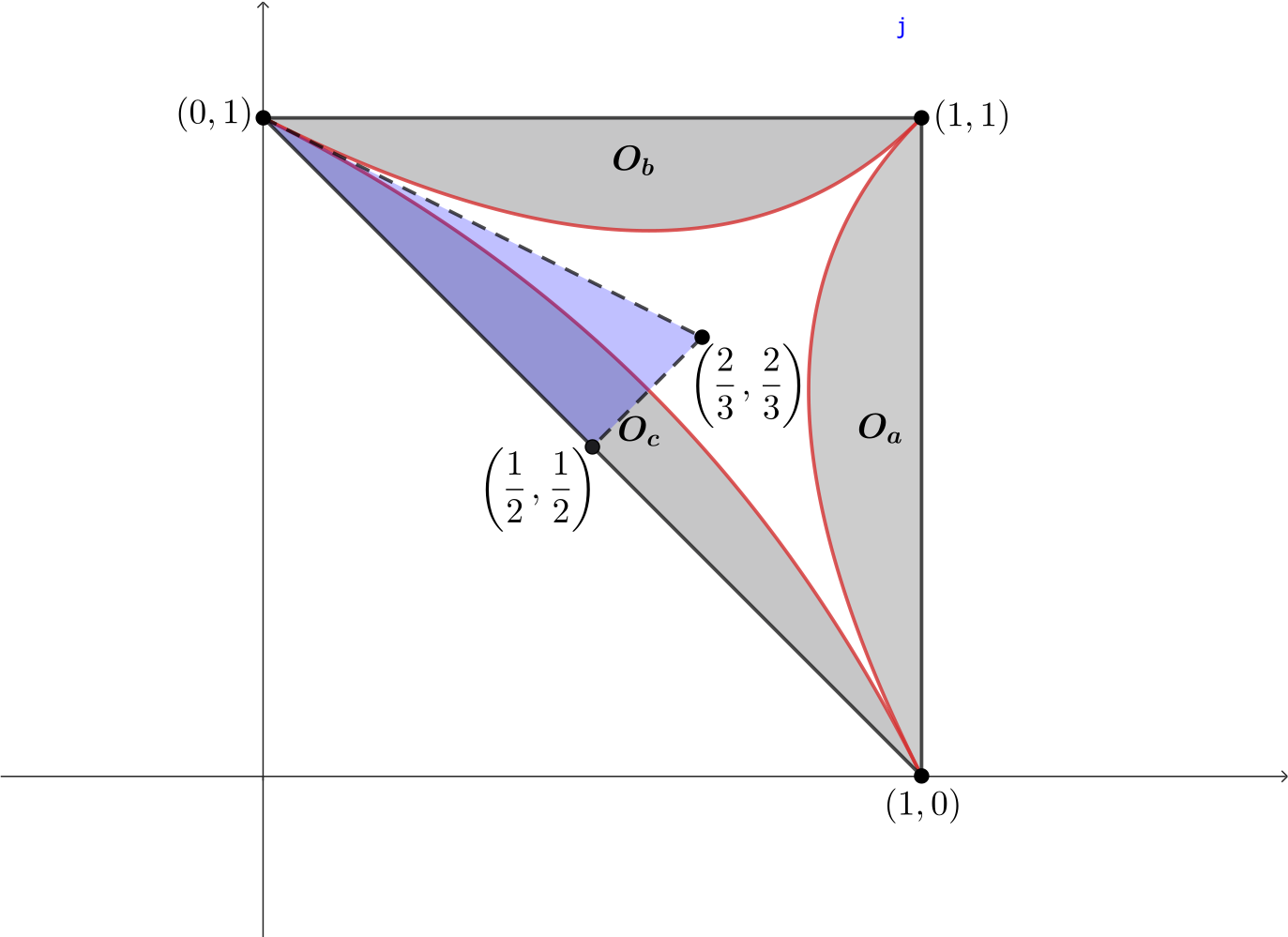}
\caption{The subset of $\mathcal{T}_\Delta$ corresponding to obtuse triangles defines the three regions $O_a,O_b$ and $O_c$ (shaded grey). The quotient of $O_a \cup O_a \cup O_b$ by the action of $S_3$ determines the subset $\mathcal{O}$ of similarity classes of obtuse triangles in $\mathcal{M}_\Delta$ (shaded purple). }
\label{fig:ObtuseTriangles}
\end{figure}

\begin{lem}\label{lem:muobt}
    The measure $\mu(O_a \cup O_b\cup O_c)= \tfrac{9}{2}-6\ln 2 \approx 0.341117$.
\end{lem}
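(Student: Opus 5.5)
The plan is to compute the area of one region, $O_c$, in the $ab$-plane using the identification \eqref{ab-param}, and then multiply by three. This works because $O_a$, $O_b$, $O_c$ are pairwise disjoint, since a triangle has at most one obtuse angle. They also have equal area, because the cyclic permutation of coordinates carries one region to another and, as explained above, this action preserves $\mu$.

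\textbf{Rewriting the obtuseness condition.} Substituting $c=2-a-b$ into $c^2>a^2+b^2$ gives
\[
4-4a-4b+2ab>0,
\]
which is equivalent to
\[
(2-a)(2-b)>2.
\]
Since $0<a<1$, we have $2-a>0$, so the condition becomes
\[
b<2-\frac{2}{2-a}=\frac{2-2a}{2-a}.
\]

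\textbf{Describing $O_c$ as a region under a graph.} The remaining constraints from \eqref{ab-param} are $a+b>1$ and $a,b<1$.
\begin{itemize}
\item The upper bound $\frac{2-2a}{2-a}$ is less than $1$ for $a\in(0,1)$, so the constraint $b<1$ is automatic.
\item The inequality $\frac{2-2a}{2-a}>1-a$ reduces to $\frac{2}{2-a}>1$, which holds on $(0,1)$. So the interval for $b$ is nonempty for every $a\in(0,1)$.
\end{itemize}
Hence
\[
O_c=\{(a,b)\ \vert\ 0<a<1,\ 1-a<b<\tfrac{2-2a}{2-a}\}.
\]

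\textbf{Computing the area.}
\[
\mu(O_c)=\int_0^1\Big(2-\frac{2}{2-a}-(1-a)\Big)\,da .
\]
The first two terms give $\int_0^1 \big(2-\frac{2}{2-a}\big)\,da=2-2\ln 2$, and $\int_0^1(1-a)\,da=\tfrac12$. Therefore
\[
\mu(O_c)=\tfrac32-2\ln 2,
\]
and
\[
\mu(O_a\cup O_b\cup O_c)=3\mu(O_c)=\tfrac92-6\ln 2.
\]

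The only step needing care is the correct description of the region: checking that the constraint $b<1$ is inactive and that the upper boundary curve lies above the line $a+b=1$ throughout $(0,1)$. The integration itself is routine.
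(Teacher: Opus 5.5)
Your proof is correct and follows the paper's route: reduce to $3\mu(O_c)$, describe $O_c$ in the $ab$-plane as the region between the line $a+b=1$ and the right-angle curve $(2-a)(2-b)=2$, and integrate. You integrate in $a$ where the paper integrates in $b$, which is the same computation by symmetry, and your explicit checks (disjointness of $O_a,O_b,O_c$, the constraint $b<1$ being inactive, the curve lying above the line) fill in details the paper leaves implicit.
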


\begin{proof}
    By cyclic symmetry, $\mu(O_a \cup O_b\cup O_c) = 3\mu(O_c)$ so it suffices to compute the latter.

    Recall that the angle opposite $c$ is a right angle exactly when $c^2 = a^2 +b^2$, which using $c=2-a-b$ yields 
\[
(2-a-b)^2 = a^2+b^2
\quad\Longleftrightarrow\quad ab-2a-2b+2=0.
\]
Solving for $a$ then defines the curve
\[
a=\frac{2(1-b)}{2-b}, \qquad b\in(0,1)
\]
in the $ab$-plane, which  is the projection of the locus of right-angled triangles in $\mathcal{M}_\Delta$. 

The subset $O_c$ corresponding to the set of obtuse triangles thus projects to the region $R$ in the $ab$-plane bounded by the curve above, and the line segment  $a+b=1$. (See Figure \ref{fig:ObtuseTriangles}.)  This description of the locus was also observed in \cite{Stewart} and \cite{Gaspar}. We then have $$\mu(O_c) = \text{Area}(R) =\int_0^1 \left(\frac{2(1-b)}{2-b} - (1-b)\right)\,db. $$

Simplifying the integrand, we obtain
\[
\frac{2(1-b)}{2-b} - (1-b) 
= (1-b)\cdot\frac{b}{2-b}
= b+1+\frac{2}{b-2}.
\]

Hence the integral evaluates to yield
\[
\text{Area}(R)
=\Big[\tfrac{b^2}{2}+b+2\ln(2-b)\Big]_{0}^{1}
=\tfrac{3}{2}-2\ln 2.
\]

The total  measure of obtuse triangles (with labelling)  is $3\mu(O_c)
=\tfrac{9}{2}-6\ln 2 \approx 0.341117$.  \end{proof}

\medskip

Now, let $\mathcal{L}_N$ denote the set of lattice triangles in $[-N,N]^2$, and let $S_N$ be the corresponding weighted subset in $\mathcal{M}_\Delta$. If we assume that $S_N$ equidistributes in $\mathcal{M}_\Delta$ as $N\to \infty$, then by setting $R = \mathcal{O} = \big\{\text{ obtuse lattice triangles up to similarity}\big\}$ in \eqref{eq:equi}, we obtain
\begin{equation}\label{eq2}
    \frac{1}{w(S_N)}\delta_{S_N}(\mathcal{O})  \to  \frac{\mu(\mathcal{O})}{\mu(\mathcal{M}_\Delta)} = \frac{\mu(O_a\cup O_b\cup O_c)}{\mu(\mathcal{T}_\Delta)} = \frac{\tfrac{9}{2}-6\ln 2}{\tfrac12}
 \approx 0.682234
\end{equation}
as $N\to \infty$. Here, the first equality holds because both the numerator and denominator get scaled by a factor of $6$, and the second equality follows from Lemmas \ref{tot} and \ref{lem:muobt}.

However, by Definition \ref{defn:dirac} we have that the left hand side equals
\begin{equation}\label{eq2b}
 \frac{1}{w(S_N)}\delta_{S_N}(\mathcal{O}) = \frac{\lvert \{\text{triangles in } \mathcal{L}_N\text{ that are obtuse} \}\rvert}{\lvert \mathcal{L}_N\rvert} 
\end{equation} 
which, after rescaling the square $[-N,N] \times [-N,N]$ by a factor of $1/2N$,  converges as $N\to \infty$ to the probability that a  random triangle in the unit square is obtuse (\textit{c.f.} Example (ii) following Definition \ref{defn:equi}). By Proposition \ref{prop:langf}, the right-hand side of \eqref{eq2b} is approximately 0.72520648. 

This is a contradiction -- on one hand, equidistribution would force the limiting proportion of obtuse triangles to be $\approx 0.682234$ by our area computation, and on the other hand, Langford's result in geometric probability shows it must be $\approx 0.72520648$. Since these values differ, equidistribution cannot hold and the subsets $\{S_N\}_{N\geq 1}$ do not equidistribute in $\mathcal{M}_\Delta$. This completes the proof of Theorem \ref{thm:nequi}.

\section{Epilogue}\label{section:epilogue}

As mentioned in the Introduction, we do not know the expression for the exact limiting probability distribution in $\mathcal{M}_\Delta$ for the weighted subsets $S_N$ that we considered, as $N\to \infty$.

\begin{figure}
\centering
\includegraphics{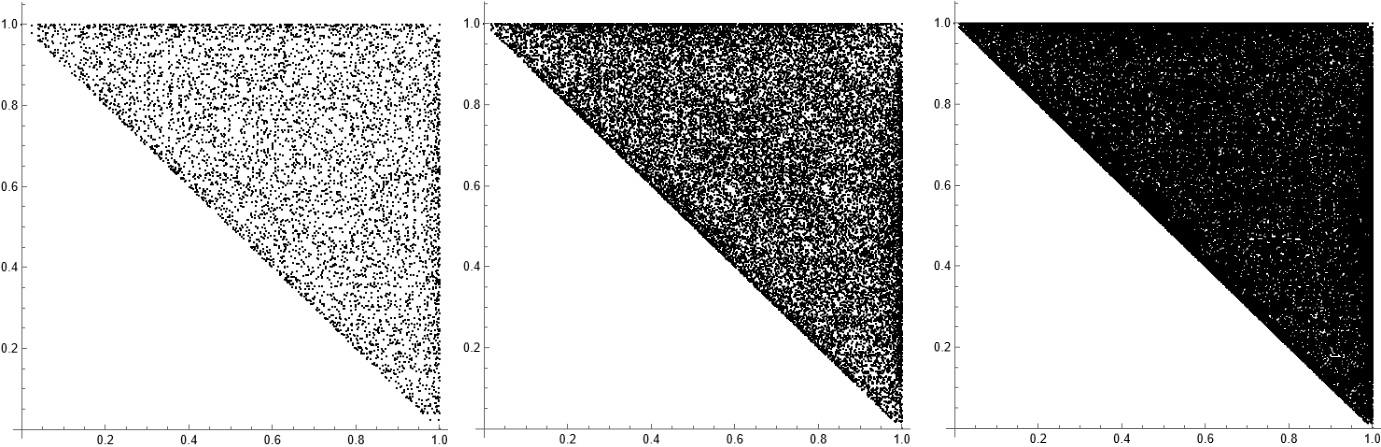}
\caption{Points in $\mathcal{T}_\Delta$ corresponding to a random sample of $10^3$ points (left), $5\times 10^3$ points (middle), and $1.5 \times 10^4$ points (right) in $\mathcal{S}_{100}$,\ i.e.,  lattice triangles in $[-100, 100] \times [-100,100]$. In the middle figure notice that more points seem to concentrate close to the sides; this is expected as by Langford's result, $\approx 72\%$ of obtuse triangles needs to fit into $\approx 68\%$ of the area of $\mathcal{T}_\Delta$.  The distribution away from the sides is visually close to being uniform, which is confirmed by density plots.}
\label{fig:plot}
\end{figure}

By  rescaling of the square subset $\mathbb{Z}^2 \cap ([-N,N] \times [-N, N])$ by a factor of $1/2N$, as before,  we can reformulate this question as:

\begin{ques}\label{ques:dist}
    Given a triple of points $A,B,C$ in the unit square $[0,1]\times [0,1]$ chosen independently and randomly with respect to the uniform distribution, what is the probability distribution of the normalized triple  of side-lengths $(a,b,c) \in \mathbb{R}^3_+$ as defined in \eqref{eq:norm}?
\end{ques}

\medskip 


There is a simpler related question in geometric probability regarding \textit{pairs} of random points (rather than triples)  whose answer is known.

\begin{ques} What is the expected distance between two points in the unit square $[0,1] \times [0,1]$ chosen randomly with respect to the uniform distribution?
\end{ques}

The answer to this is approximately 0.5214 (see, for example, the solution to Problem 95-6 in \cite{Boehm}), and the probability distribution of this random distance has been studied for a long time (see, for example, \cite{Ghosh}).  This distribution is far from being uniform -- for a plot see \cite{WolframSite}.

\medskip 

We end with some other questions and pointers to the literature that an interested reader may pursue.

\subsection{Forgetting weights}

For Theorem \ref{thm:nequi} we considered the sequence of \textit{weighted} sets  $S_N$ in $\mathcal{M}_\Delta$ corresponding to lattice triangles in $[-N,N]^2$; a natural question that arises is what distribution do we get if one considers only the underlying sets (forgetting the weights). 

\begin{ques} Let $S_N = \sum\limits_{i=1}^M w_i p_i$ where $w_i$ is the weight/multiplicity of the point $p_i$. Then does the sequence of underlying sets  points $S_N^\prime = \{p_1,p_2,\ldots, p_M\}$ equidistribute in $\mathcal{M}_\Delta$, that is, does $\displaystyle\frac{1}{M} \sum\limits_{i=1}^M \delta_{p_i} \to \mu$ as $N \to \infty$?
\end{ques}

(It is not hard to see that the cardinality of the sets $M\to \infty$ as $N \to \infty$.) 

\medskip

Numerical experiments suggest that the answer to the above question is again negative, and that the limiting distribution differs from that of the sequence of weighted sets (see Figure~\ref{fig:probability_plot}).

\begin{figure}[h]
\centering
\includegraphics{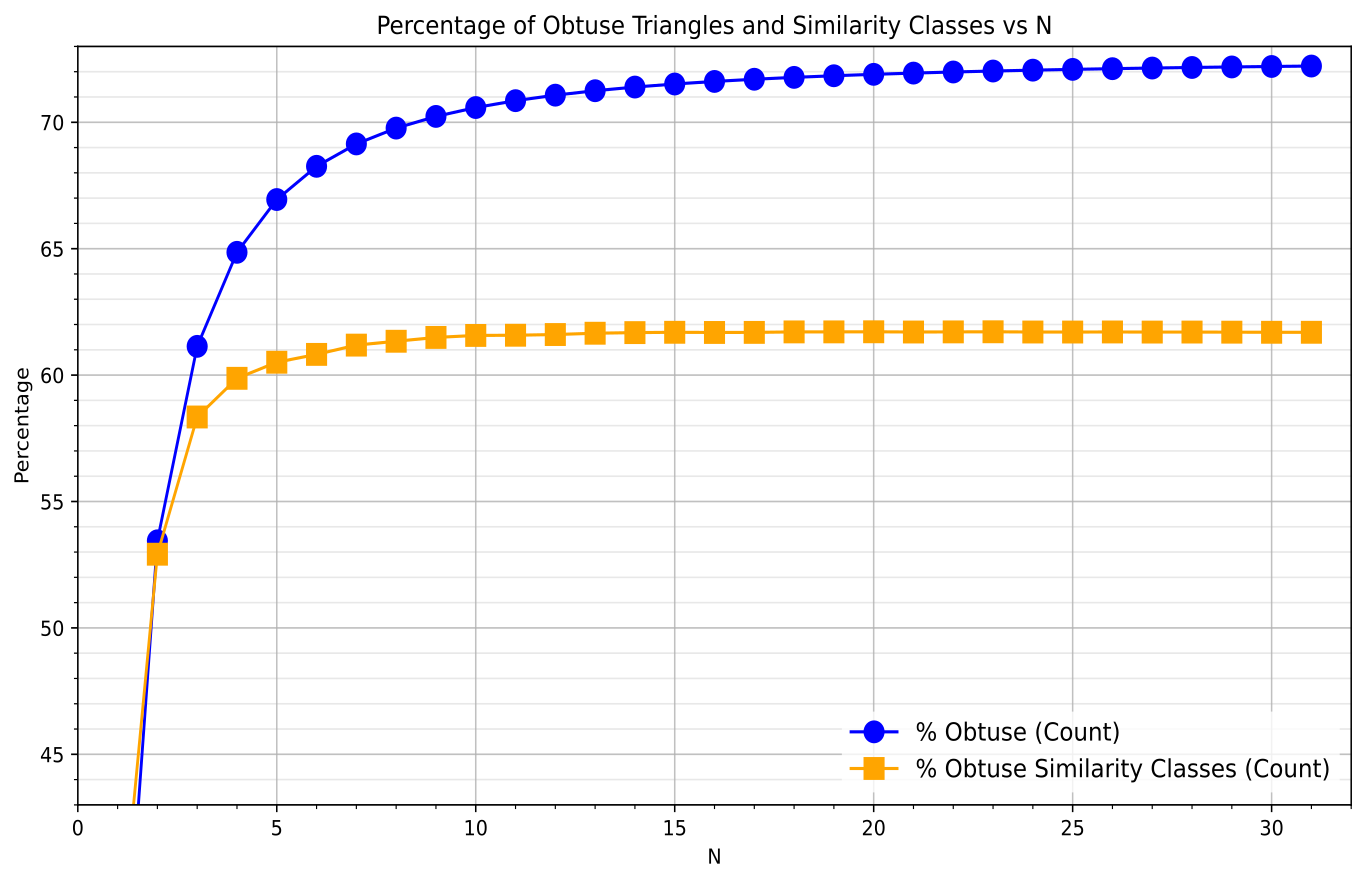}
\caption{The percentage of obtuse triangles amongst all triangles in $[-N,N]^2$, when $2\leq N\leq 31$ is plotted in blue, and the percentage of \textit{similarity classes} of obtuse triangles amongst all similarity classes is plotted in orange. Note that the blue curve approaches $\approx 72\%$ as expected by Langford's result, while the orange curve approaches $\approx 62\%$. }
\label{fig:probability_plot}
\end{figure}

\subsection{Rate of convergence}


It would be interesting to obtain a quantitative understanding of how fast the weighted subsets $S_N$ become dense in $\mathcal{M}_\Delta$ as $N\to \infty$, that is,\ obtain a rate of convergence to the limiting distribution, as that may have connections with number-theory. Indeed, the problem of counting integer lattice points in a disk of radius $R$, or equivalently pairs of integers the sum of whose squares is at most $R^2$, is the famous Gauss Circle problem (see, for example \cite{Gausscircle-wiki}).

\subsection{Higher dimensions}

It is easily seen that the $3$-dimensional integer lattice $\mathbb{Z}^3$ has lattice tetrahedra that are \textit{regular} (the analogue of an equilateral triangle). In general, a regular $n$-simplex embeds in the $n$-dimensional lattice when $n=3,7,8,9,11,15,17, 19,23,\ldots$  (see \cite{oeis} and \cite{Schoenberg} for the exact result). 

The moduli space of tetrahedra in $\mathbb{R}^3$ (or  simplices in $\mathbb{R}^n$ where $n>2$) is higher-dimensional, so it is more difficult to visualize points corresponding to those with vertices in the integer lattice, like in Figure \ref{fig:plot}. However, we expect that the analogues of the density result (Theorem \ref{thm:main}) and non-equidistribution (Theorem \ref{thm:nequi}) will continue to hold in this setting.

\vspace{0.5in} 
\textbf{Interest disclosure:} There are no relevant financial or non-financial competing interests to report.

\bibliographystyle{amsalpha}
\bibliography{references}

@article{Shirali,
  author    = {Shirali, Shailesh},
  title     = {{Lattice point geometry, Part-I}},
  journal   = {Azim Premji University At Right Angles},
  year      = {2019},
  pages     = {5--10},
  issn      = {2582-1873},
  publisher = {Azim Premji University together with Community Mathematics Center, Rishi Valley}
}

@article {Beeson,
    AUTHOR = {Beeson, Michael J.},
     TITLE = {Triangles with vertices on lattice points},
   JOURNAL = {Amer. Math. Monthly},
  FJOURNAL = {American Mathematical Monthly},
    VOLUME = {99},
      YEAR = {1992},
    NUMBER = {3},
     PAGES = {243--252},
}

@misc{HamkinsBlog,
  author  = {Hamkins, Joel D.},
  title   = {There are no nondegenerate regular polygons in the integer lattice, except for squares},
  year    = {2013},
  month   = {Jan},
  day     = {14},
  note = {\url{https://jdh.hamkins.org/no-regular-polygons-in-the-integer-lattice//}},
}

@misc{WolframSite,
  author  = {Weisstein, Eric W.},
  title   = {Square Line-Picking},
  note = {From MathWorld--A Wolfram Resource, at \url{https://mathworld.wolfram.com/SquareLinePicking.html}},
}

@incollection {Behrend,
    AUTHOR = {Behrend, K.},
     TITLE = {Introduction to algebraic stacks},
 BOOKTITLE = {Moduli spaces},
    SERIES = {London Math. Soc. Lecture Note Ser.},
    VOLUME = {411},
     PAGES = {1--131},
 PUBLISHER = {Cambridge Univ. Press, Cambridge},
      YEAR = {2014},
}

@article{Langford,
    author = {Langford, Eric},
    title = {The probability that a random triangle is obtuse},
    journal = {Biometrika},
    volume = {56},
    number = {3},
    pages = {689-690},
    year = {1969},
    month = {12},
    issn = {0006-3444},
    doi = {10.1093/biomet/56.3.689},
    url = {https://doi.org/10.1093/biomet/56.3.689},
    eprint = {https://academic.oup.com/biomet/article-pdf/56/3/689/636943/56-3-689.pdf},
}

@book {Uniform-book,
    AUTHOR = {Kuipers, L. and Niederreiter, H.},
     TITLE = {Uniform distribution of sequences},
    SERIES = {Pure and Applied Mathematics},
 PUBLISHER = {Wiley-Interscience [John Wiley \& Sons], New
              York-London-Sydney},
      YEAR = {1974},
}

@article {AcuteT,
    AUTHOR = {Miyachi, Hideki and Ohshika, Ken'ichi and Papadopoulos,
              Athanase},
     TITLE = {On the {T}eichm\"uller space of acute triangles},
   JOURNAL = {Monatsh. Math.},
  FJOURNAL = {Monatshefte f\"ur Mathematik},
    VOLUME = {205},
      YEAR = {2024},
    NUMBER = {3},
     PAGES = {649--666},
}

@incollection {DS,
    AUTHOR = {Davenport, H. and Schmidt, Wolfgang M.},
     TITLE = {Dirichlet's theorem on diophantine approximation},
 BOOKTITLE = {Symposia {M}athematica, {V}ol. {IV} ({INDAM}, {R}ome,
              1968/69)},
     PAGES = {113--132},

}

@article {Langford2,
    AUTHOR = {Langford, Eric},
     TITLE = {A problem in geometrical probability},
   JOURNAL = {Math. Mag.},
  FJOURNAL = {Mathematics Magazine},
    VOLUME = {43},
      YEAR = {1970},
     PAGES = {237--244},
}

@article {Schoenberg,
    AUTHOR = {Schoenberg, I.J.},
     TITLE = {Regular simplices and quadratic forms},
   JOURNAL = {J. Lond. Math. Soc.},
    VOLUME = {12},
      YEAR = {1937},
     PAGES = {48--55},
}

@misc{oeis,
    Author = {{OEIS Foundation Inc.}},
    Note = {Published electronically at \url{http://oeis.org}},
    Title = {Entry {A}096315 in The {O}n-{L}ine {E}ncyclopedia of {I}nteger {S}equences},
    Year = {2025}
}

@article {Portnoy,
    AUTHOR = {Portnoy, Stephen},
     TITLE = {A {L}ewis {C}arroll pillow problem: probability of an obtuse
              triangle},
   JOURNAL = {Statist. Sci.},
  FJOURNAL = {Statistical Science. A Review Journal of the Institute of
              Mathematical Statistics},
    VOLUME = {9},
      YEAR = {1994},
    NUMBER = {2},
     PAGES = {279--284},
}

@article {Stewart,
    AUTHOR = {Stewart, Ian},
     TITLE = {Why do all triangles form a triangle?},
   JOURNAL = {Amer. Math. Monthly},
  FJOURNAL = {American Mathematical Monthly},
    VOLUME = {124},
      YEAR = {2017},
    NUMBER = {1},
     PAGES = {70--73},

}

@article {ES,
    AUTHOR = {Edelman, Alan and Strang, Gilbert},
     TITLE = {Random triangle theory with geometry and applications},
   JOURNAL = {Found. Comput. Math.},
  FJOURNAL = {Foundations of Computational Mathematics. The Journal of the
              Society for the Foundations of Computational Mathematics},
    VOLUME = {15},
      YEAR = {2015},
    NUMBER = {3},
     PAGES = {681--713},
}

@article {Gaspar,
    AUTHOR = {Gaspar, Jaime},
     TITLE = {All triangles at once},
   JOURNAL = {Amer. Math. Monthly},
  FJOURNAL = {American Mathematical Monthly},
    VOLUME = {122},
      YEAR = {2015},
    NUMBER = {10},
     PAGES = {982},
}

@article {Ghosh,
    AUTHOR = {Ghosh, Birendranath},
     TITLE = {Random distances within a rectangle and between two
              rectangles},
   JOURNAL = {Bull. Calcutta Math. Soc.},
  FJOURNAL = {Bulletin of the Calcutta Mathematical Society},
    VOLUME = {43},
      YEAR = {1951},
     PAGES = {17--24},
}

@article{Boehm,
 URL = {http://www.jstor.org/stable/2132885},
 author = {Cecil C. Rousseau and Otto G. Ruehr},
 journal = {SIAM Review},
 number = {2},
 pages = {313--332},
 publisher = {Society for Industrial and Applied Mathematics},
 title = {Problems and Solutions},
 volume = {38},
 year = {1996}
}

@misc{Gausscircle-wiki,
    author = {Wikipedia-contributors},
    title = {Gauss circle problem --- {Wikipedia}{,} The Free Encyclopedia},
    note = {\url{https://en.wikipedia.org/w/index.php?title=Gauss_circle_problem&oldid=1300197383}},
  }

@misc{Weyl-wiki,
    author = {Wikipedia-contributors},
    title = {Equidistribution theorem --- {Wikipedia}{,} The Free Encyclopedia},
    note = {\url{https://en.wikipedia.org/w/index.php?title=Equidistribution_theorem&oldid=1306115791}},
  }


\end{document}